\newcommand{\CO}{{\mathcal {O}}}
\newcommand{\CS}{{\mathcal {S}}}
\newcommand{\cS}{{\mathcal {S}}}
\newcommand{\RH}{{\mathrm {H}}}
\newcommand{\RJ}{{\mathrm {J}}}
\newcommand{\RO}{{\mathrm {O}}}
\newcommand{\GL}{{\mathrm{GL}}}
\newcommand{\Hom}{{\mathrm{Hom}}}
\newcommand{\Ind}{{\mathrm{Ind}}}
\newcommand{\Sp}{{\mathrm{Sp}}}
\newcommand{\wt}{\widetilde}
\newcommand{\ov}{\overline}
\newcommand{\od}{\operatorname{d}}
\newcommand{\C}{\mathbb{C}}
\newcommand{\R}{\mathbb R}
\newcommand{\abs}[1]{\lvert#1\rvert}
\newcommand{\rj}{\mathrm{j}}
\newcommand{\be}{\begin {equation}}
\newcommand{\ee}{\end {equation}}
\newcommand{\bp}{\begin {proof}}
\newcommand{\ep}{\end {proof}}
\newcommand{\bee}{\begin {equation*}}
\newcommand{\eee}{\end {equation*}}
\theoremstyle{plain}
\newtheorem{thm}{Theorem}[section]
\newtheorem{lemma}[thm]{Lemma}
\newtheorem{prop}[thm]{Proposition}
\theoremstyle{definition}
\newtheorem{defi}[thm]{Definition}
\theoremstyle{remark}
\newtheorem{rmk}[thm]{Remark}
\begin{document}

\title[Full theta lifting for type II reductive dual pairs]{Full theta lifting for type II reductive dual pairs}

\author [H.Xue] {Huajian Xue}
\address{Department of Mathematics \\
Shantou Uninversity\\
Shantou\\
China} \email{hjxue@stu.edu.cn}

\subjclass[2010]{22E50}

\keywords{Full theta lifting, reductive dual pair}

\begin{abstract}
In this article, we study the full theta lifting for two cases of type II reductive dual pairs over a nonarchimedean local field. Firstly, we determine the structure of the full theta lifts of all irreducible representations for dual pair $(\GL(2),\GL(2))$. Secondly, we prove that the full theta lift of an irreducible tempered representation is irreducible and tempered for dual pair $(\GL(n),\GL(n+1))$, where $n$ is a positive integer.
\end{abstract}

\maketitle

\section{Introduction}\label{sec-intro}

Let $F$ be a local field of characteristic 0. Let $W$ be a finite dimensional symplectic space over $F$ with symplectic form $\langle\, , \,\rangle_W$. Denote by $\RH(W):=W\times F$ the Heisenberg group attached to $W$.
The group multiplication is defined by
\[
(w_1, t_1)(w_2, t_2)=(w_1+w_2, t_1 +t_2+\langle w_1, w_2\rangle_W), \quad w_1, w_2\in W, t_1, t_2\in F.
\]
The symplectic group $\Sp(W)$ has a natural action on $\RH(W)$:
\[
g.(w, t):=(g.w, t), \quad g \in\Sp(W), (w, t)\in \RH(W).
\]
Let
\be \label{eq-cover}
1\to \{\pm 1\}\to \widetilde{\Sp}(W) \to \Sp(W)\to 1
\ee
be the metaplectic cover of $\Sp(W)$. The above exact sequence induces an action of  $\widetilde{\Sp}(W)$ on $\RH(W)$. We write  $\widetilde{\RJ}(W):=\widetilde{\Sp}(W)\ltimes \RH(W)$.

Fix a nontrivial unitary character $\psi$ of $F$. Then there is a unique (up to isomorphism) smooth representation $\omega$ of $\widetilde{\RJ}(W)$ such that $\omega|_{\RH(W)}$ is irreducible and has central character $\psi$.
The representation $\omega$ is called the Weil representation.

If $H$ is a closed subgroup of  $\Sp(W)$, we will write $\wt H$ for the preimage of $H$ in $\widetilde{\Sp}(W)$. Consider a reductive dual pair $(G, G')$ in  $\Sp(W)$. Note that $\wt{G}$ and $\wt{G'}$ commute with each other, and  the representation $\omega$
induces a representation of $\wt{G}\times\wt{G'}$, which we still denote by $\omega$ for simplicity if no confusion can arise. In the theory of local theta correspondence, one is interested in how $\omega$ decomposes into irreducible representations of $\wt{G}\times\wt{G'}$.

For a reductive group $G$, write $\mathrm{Irr}(G)$ for the set of isomorphism
classes of irreducible admissible representations of $G$. Let $\pi\in\mathrm{Irr}(\wt G)$. One may consider the maximal $\pi$-isotypic quotient
$\omega/\mathcal N(\pi)$
of $\omega$, where
\[
\mathcal N(\pi):=\bigcap_{\varphi\in \Hom_{\wt{G}}(\omega, \pi)} \ker(\varphi).
\]
There is a smooth admissible representation $\Theta(\pi)$ of $\wt{G'}$, such that
\be \label{decomp-weil}
\omega/\mathcal N(\pi)\cong \pi\widehat{\boxtimes} \Theta(\pi),
\ee
where ``$\widehat{\boxtimes}$'' stands for the completed tensor product (resp. algebraic tensor product) if $F$ is archimedean (resp. non-archimedean). We call $\Theta(\pi)$ the full theta lift of $\pi$. Note that $\Theta(\pi)$ may be zero, and $\Theta(\pi)=0$ if and only if $\Hom_{\wt{G}}(\omega, \pi)=0$.

\begin{thm} \label{theta-corresp.}
If $\Theta(\pi)\neq 0$, then there is a unique $\wt{G'}$ submodule $\Theta^0(\pi)$ of $\Theta(\pi)$, such that
\[
\theta(\pi):=\Theta(\pi)/\Theta^0(\pi)
\]
is irreducible.
\end{thm}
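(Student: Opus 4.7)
The plan is to restate the theorem as the assertion that $\Theta(\pi)$ has a unique maximal proper $\wt{G'}$-submodule, equivalently that its co-socle (maximal semisimple quotient) is nonzero, irreducible and of multiplicity one. I would split the argument into the existence of at least one irreducible quotient and the uniqueness of the associated maximal proper submodule.

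For existence I would invoke general properties of $\Theta(\pi)$: it is a nonzero, smooth, admissible, finitely generated $\wt{G'}$-module. Over a nonarchimedean local field, admissibility is the theorem of Kudla (see also Moeglin-Vigneras-Waldspurger), while finite generation is built into the construction of $\Theta(\pi)$ as a quotient of a suitable finitely generated subquotient of $\omega$. A smooth admissible, finitely generated representation of a $p$-adic reductive group has finite length, so when it is nonzero it certainly admits an irreducible quotient.

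For uniqueness, suppose $\varphi_1, \varphi_2 \colon \Theta(\pi) \twoheadrightarrow \sigma_1, \sigma_2$ are two surjections onto irreducible $\wt{G'}$-representations. Pulling back through (\ref{decomp-weil}) yields $\wt{G}\times\wt{G'}$-equivariant surjections
\[
\omega \twoheadrightarrow \pi \widehat{\boxtimes} \sigma_i \qquad (i=1,2).
\]
The content of Howe duality --- in the nonarchimedean type II setting of this paper, due to M\'inguez --- is precisely the statement that an irreducible quotient of $\omega$ of the form $\pi \widehat{\boxtimes} \sigma$ determines $\sigma$ uniquely up to isomorphism, and occurs in $\omega$ with multiplicity one. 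This forces $\sigma_1 \cong \sigma_2$ and $\ker(\varphi_1)=\ker(\varphi_2)$; taking $\Theta^0(\pi)$ to be this common kernel produces the required submodule, and its uniqueness is immediate, since any submodule whose quotient is irreducible must equal this common kernel.

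The main obstacle is of course Howe duality itself, which is a deep theorem in its own right. The statement of Theorem \ref{theta-corresp.} is essentially a reformulation of Howe's duality principle, so the bulk of the work is hidden in the latter; once it is available in the setting at hand --- for type II pairs over nonarchimedean local fields, via M\'inguez --- the remainder of the argument is formal.
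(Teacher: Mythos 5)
Your argument is correct and in substance coincides with the paper's treatment: the paper offers no proof of its own, quoting the statement as the Howe duality principle with references (Howe, Waldspurger, M\'inguez, Gan--Takeda, Gan--Sun), and your reduction --- finite length of $\Theta(\pi)$ gives an irreducible quotient, while the uniqueness-plus-multiplicity-one form of Howe duality (M\'inguez for type II nonarchimedean pairs) forces all irreducible quotients to share a single kernel $\Theta^0(\pi)$ --- is exactly the formal bridge between that cited result and the statement as formulated here. The only caveat is one of scope: the theorem is stated for general local fields and dual pairs, whereas your supporting facts (admissibility/finite generation, M\'inguez) cover the nonarchimedean type II case, which is however the only case used in the rest of the paper.
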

The representation $\theta(\pi)$ is called the (small) theta lift of $\pi$. The above theorem is known as Howe duality principle (see \cite{Ho} \cite{Wal} \cite{Mi}\cite{GT}\cite{GSu} for proofs of this theorem for various cases).% It proved by Howe \cite{howe} for $F$ archimedean , and by Waldspurger \cite{Wal} for $F$ nonarchimedean with residue characteristic $p\neq 2$, and recently by Gan-Takeda \cite{GT} and Gan-Sun \cite{GaS}) for the remaining cases.

One basic problem is determining $\theta(\pi)$ and $\Theta(\pi)$ precisely for given $\pi\in\mathrm{Irr}(\wt G)$. Although there are many fruitful results concerning the (small) theta lift  $\theta(\pi)$ (see \cite{AB}\cite{LPTZ}\cite{Mo}\cite{Pa} etc.), few has been known about the full theta lift $\Theta(\pi)$. For applications to representation theory and automorphic forms, it is desirable to know whether or not the full theta lift itself is irreducible. If the local field $F$ is nonarchimedean, Kudla \cite{Ku} proves that $\Theta(\pi)$ has finite length. Furthermore, if $\pi$ is a supercuspidal representation, then $\Theta(\pi)$ (in the first occurrence) is irreducible and is also supercuspidal. In \cite{Mu}, Mui\'c shows that the lifts of discrete series behave very much like the lifts of supercuspidal representations. Explicitly, let $(G, G')$ be a type I reductive dual pair, and let $\pi$ be an irreducible discrete series representations of $G$. If the rank of $G'$ is in a suitable range (which we call the tempered range), then $\Theta(\pi)$ is irreducible and tempered. While outside this range, $\Theta(\pi)$ is either $0$ or non-tempered. In \cite{LM}, Loke and Ma prove that for type I dual pair in stable range over $\R$, the  full theta lift of a unitary representation is always irreducible except for one special case. In our previous work (see \cite{Xue}\cite{FSX}), we determined the full theta lifting explicitly for dual pair $(\GL(1),\GL(n))$, and proved that $\Theta(\pi)$ is irreducible if $\pi$ is generic for $(\GL(n),\GL(n))$.

In this article we will focus on theta lifting over nonarchimedean local field. So from now on, let $F$ be a nonarchimedean field of characteristic $0$. Denote by $\mathfrak{O}_F$ the ring of integers of $F$, and let $\varpi_F$ be a uniformizing element of it.
Write $q:=p^r$ for the cardinality of the residue field $\mathfrak{O}_F/\varpi_F\mathfrak{O}_F$, where $p$ is the characteristic of $\mathfrak{O}_F/\varpi_F\mathfrak{O}_F$.
Denote by $\abs{\,\cdot\,}_F$ the normalized absolute value on $F$. Here and henceforth, the subscript ``$F$''  in $\mathfrak{O}_F$, $\varpi_F$ and $\abs{\,\cdot\,} _F$ may be omitted
when no confusion can arise.

Let $n,m$ be two positive integers($n\leq m$). Then
$(G,G^\prime)=(\GL_n(F), \GL_m(F))$
is a type II reductive dual pair. Denote by $M_{n\times m}(F)$ the space of $n\times m$ matrices over $F$.
Let $\CS(M_{n\times m}(F))$ be the space of Schwartz-Bruhat functions on $M_{n\times m}(F)$,
namely,  the space of compactly supported, locally constant $\C$-valued functions on $M_{n\times m}(F)$.
The Weil representation can be realized on $\CS(M_{n\times m}(F))$. More precisely, note that $\CS(M_{n\times m}(F))$ carries a natural action of $\GL_n(F)\times \GL_m(F)$ which is given
by
\[
(g_1, g_2)f(M):=f(g_1^{-1}Mg_2), \quad (g_1, g_2)\in\GL_n(F)\times \GL_m(F), M\in M_{n\times m}(F).
\]
The Weil representation $\omega_{n,m}$ of $\GL_n(F)\times \GL_m(F)$ is defined to be
\[
\omega_{n,m}:=\CS(M_{n\times m}(F))\otimes(\abs\det^{-\frac{m}{2}}\boxtimes\abs\det^{\frac{n}{2}}).
\]
As usual, we do not distinguish a representation with its underlying space.

If $\pi\in \mathrm{Irr}(G)=\mathrm{Irr}(\GL_n(F))$, then the full theta lift of $\pi$ is given by
\[
\Theta(\pi)=\left(\omega_{n,m}\otimes\pi^\vee\right)_{G},
\]
where $V_G$ means taking the maximal quotient of $V$ on which $G$ (viewed naturally as a subgroup of $G\times G^\prime$) acts trivially.

In this paper we will study the structure of $\Theta(\pi)$ for two cases of type II dual pairs. We give an outline of this paper. Section \ref{sec-full-theta-(GL2-GL2)} is devoted to the full theta lifting for $(\GL_2(F),\GL_2(F))$. By checking the poles of the L-function of  $\pi$, we show that $\Theta(\pi)$ is irreducible if $\pi$ is non-trivial. The case of trivial representation is much more involved. In fact, we need to consider the generalized semi-invariant distribution on $M_{2\times 2}(F)$. We make use of some results about generalized semi-invariant distribution given in \cite{HS}, and show that $\Theta(\pi)$ is isomorphic to an induce representation of length 2 if $\pi$ is trivial. In Section \ref{sec-almost-eq-rank}, we study the dual pair $(\GL_n(F),\GL_{n+1}(F))$. We firstly introduce two main tools, Kudla's filtration and Casselman's square-integrability criterion, which will be frequently used in our proofs. Then we prove that the full theta lift of a discrete series representation is irreducible and tempered. Based on the discrete series case and an induction technique, we are able to show that the full theta lift of a tempered representation is also irreducible and tempered.

\textbf{Acknowledgements}: The author would like to thank Professor Binyong Sun for enlightening discussions and valuable suggestions. The author is supported by NSFC grant No. 12001350 and a startup grant from Shantou University (No. 35941912).

\section{Full theta lifting for $(\GL_2(F),\GL_2(F))$} \label{sec-full-theta-(GL2-GL2)}

\subsection{Irreducible representations of $\GL_2(F)$} \label{sec-rep-GL2}

In this subsection we put $G:=\GL_2(F)$, and denote by $B$  the standard Borel subgroup of $G$ which consists of upper triangular matrices. Let $\chi_1$ and $\chi_2$ be two characters of $F^\times$.
The pair $(\chi_1, \chi_2)$ determines a character $\chi$ of $B$ given by
\[
\chi\left( \begin{pmatrix}
a & \\
 & b
\end{pmatrix}\begin{pmatrix}
1 & x\\
 & 1
\end{pmatrix}\right):=\chi_1(a)\chi_2(b).
\]
We define the normalized smooth induction of $\chi$ to be
\[
I(\chi_1,\chi_2):=\Ind_B^G \chi=\left\{f\in C^\infty(G)\middle| f\left( \begin{pmatrix}
a & \\
 & b
\end{pmatrix}\begin{pmatrix}
1 & x\\
 & 1
\end{pmatrix}g\right)=\chi_1(a)\chi_2(b)\left|\frac{a}{b}\right|^{\frac{1}{2}}f(g)\right\},
\]
on which the group $G$ acts by right translation.

\begin{lemma}\label{lem-criterion-irr-Ind}(\cite[Theorem 3.3]{JL})
Let $\chi_1$ and $\chi_2$ be two characters of $F^\times$, then $I(\chi_1,\chi_2)$ is admissible. Moreover, \\
(i) if $\chi_1 \chi_2^{-1}\neq\abs{\,\cdot\,}^{\pm 1}$, then $I(\chi_1,\chi_2)$ is irreducible. \\
(ii) if $\chi_1 \chi_2^{-1}=\abs{\,\cdot\,}$, then $I(\chi_1,\chi_2)$ contains an irreducible subrepresentation of codimension $1$. \\
(iii) if $\chi_1 \chi_2^{-1}=\abs{\,\cdot\,}^{-1}$, then $I(\chi_1,\chi_2)$ contains a $1$-dimensional subrepresentation whose quotient is irreducible.
\end{lemma}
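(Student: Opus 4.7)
The plan is a classical Jacquet--Langlands style argument: first admissibility from the Iwasawa decomposition, then a Jacquet-module analysis for the generic case, and finally explicit construction of the composition series in the two degenerate cases. Admissibility is quick: with $K := \GL_2(\mathfrak{O})$, the Iwasawa decomposition $G = BK$ gives an isomorphism of smooth $K$-modules $I(\chi_1,\chi_2)|_K \cong \Ind_{B\cap K}^K(\chi|_{B\cap K})$, which is induced from a closed subgroup of a compact group, so each irreducible $K$-type occurs with finite multiplicity by Frobenius reciprocity.

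For part (i), I would compute the normalized Jacquet module $J_B(I(\chi_1,\chi_2))$ via the geometric lemma applied to the Bruhat stratification $G = B \sqcup BwB$, obtaining a length-two filtration whose graded pieces are the two characters $\chi_1 \boxtimes \chi_2$ and $\chi_2 \boxtimes \chi_1$ of the diagonal torus $T$. If $V \subsetneq I(\chi_1,\chi_2)$ were a nonzero proper subrepresentation, then since no nonzero subrepresentation or quotient of a principal series is supercuspidal, $J_B(V)$ would be a nonzero proper $T$-submodule of this length-two Jacquet module. Frobenius reciprocity combined with the standard intertwining operator $M_w\colon I(\chi_1,\chi_2) \to I(\chi_2,\chi_1)$, which is an isomorphism precisely when $\chi_1\chi_2^{-1} \neq |\,\cdot\,|^{\pm 1}$, would then embed $V$ as a proper nonzero subrepresentation of an irreducible principal series, a contradiction.

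For parts (ii) and (iii), I would use the duality $I(\chi_1,\chi_2)^\vee \cong I(\chi_2^{-1},\chi_1^{-1})$ to reduce (iii) to (ii). Assuming $\chi_1\chi_2^{-1} = |\,\cdot\,|$, I would construct an explicit surjection $I(\chi_1,\chi_2) \twoheadrightarrow \eta \circ \det$ for the character $\eta$ satisfying $\eta(ab)=\chi_1(a)\chi_2(b)|a/b|^{1/2}$, obtained either from the residue of the standard intertwining operator at its pole, or directly by producing the appropriate $G$-invariant functional on the induced space. Denoting its kernel by $V$, a further Jacquet-module computation shows $J_B(V)$ is the single character $\chi_2 \boxtimes \chi_1$, so any proper nonzero subrepresentation of $V$ would have vanishing Jacquet module, hence be supercuspidal; this is impossible inside a principal series, giving irreducibility of $V$.

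The main obstacle is the degenerate subcase $\chi_1 = \chi_2$ of (i), where the two graded pieces of $J_B(I(\chi_1,\chi_2))$ coincide and the Jacquet module may fail to be semisimple, so one cannot simply read off $J_B(V)$ as one of two distinct lines. There I would have to go beyond the formal bookkeeping and analyze the explicit non-split extension, for instance by checking that the two a priori distinct maps $V \hookrightarrow I(\chi_1,\chi_2)$ arising from Frobenius reciprocity at each character coincide, and by comparing $K$-types at the level of admissibility to rule out any proper $G$-submodule.
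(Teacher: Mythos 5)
The paper does not prove this lemma at all: it is quoted from Jacquet--Langlands, whose own argument runs through the Kirillov model and the local functional equation rather than Jacquet modules. Your route (admissibility from the Iwasawa decomposition, the length-two Jacquet module from the Bruhat stratification, exclusion of supercuspidal subquotients, intertwining operators, explicit one-dimensional quotient in the degenerate cases) is the standard Casselman/Bernstein--Zelevinsky treatment and is a perfectly legitimate alternative; the admissibility step and the analysis of parts (ii)--(iii) via the single remaining exponent in $J_B(V)$ are essentially correct as sketched.

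There are, however, two genuine soft spots in part (i). First, the statement that $M_w\colon I(\chi_1,\chi_2)\to I(\chi_2,\chi_1)$ ``is an isomorphism precisely when $\chi_1\chi_2^{-1}\neq\abs{\,\cdot\,}^{\pm1}$'' is carrying the entire weight of the proof, and as a black box it is dangerously close to the assertion being proved. To use it non-circularly you must actually establish it, e.g.\ by computing $M_w\circ M_w=c(\chi_1,\chi_2)\cdot\mathrm{id}$ with the explicit $c$-function (a Gindikin--Karpelevich type computation, or rationality in an unramified twisting variable) and checking that $c$ is finite and nonzero exactly off $\chi_1\chi_2^{-1}\in\{\abs{\,\cdot\,}^{\pm1}\}$ --- and even then the unnormalized operator has a pole at $\chi_1=\chi_2$, so your statement is literally false there without normalization. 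Second, your plan for the case $\chi_1=\chi_2$ is not yet an argument: ``comparing $K$-types at the level of admissibility'' cannot rule out a proper submodule, since a submodule and the corresponding quotient share $K$-types. The standard completion is: show that $J_B(I(\chi,\chi))$ is a \emph{non-split} self-extension of $\chi\boxtimes\chi$ (the extension class is given by the valuation character), so that $\Hom_T(J_B(I),\chi\boxtimes\chi)$ is one-dimensional and hence $\End_G(I)=\C$ by Frobenius reciprocity; then twist by an unramified character to make $\chi$ unitary, so that $I$ is unitarily induced, hence semisimple of finite length, and a semisimple representation with one-dimensional endomorphism algebra is irreducible.

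Two smaller slips, which do not affect the architecture but should be fixed: the duality is $I(\chi_1,\chi_2)^\vee\cong I(\chi_1^{-1},\chi_2^{-1})$, not $I(\chi_2^{-1},\chi_1^{-1})$ (the swapped form is false in the reducible cases --- it would identify $I(\abs{\,\cdot\,}^{1/2},\abs{\,\cdot\,}^{-1/2})$ with its own dual --- and with it the reduction of (iii) to (ii) does not literally go through); and in case (ii) the relation defining $\eta$ should be $\eta(ab)=\chi_1(a)\chi_2(b)\abs{a/b}^{-1/2}$, i.e.\ $\eta=\chi_1\abs{\,\cdot\,}^{-1/2}=\chi_2\abs{\,\cdot\,}^{1/2}$; with the exponent $+1/2$ as you wrote it, no such character of $F^\times$ exists when $\chi_1\chi_2^{-1}=\abs{\,\cdot\,}$ (that relation instead characterizes the one-dimensional subrepresentation in case (iii)).
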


A representation $I(\chi_1,\chi_2)$ in case (i) of Lemma \ref{lem-criterion-irr-Ind} is called an irreducible principal series representation, while the irreducible subrepresentation of $I(\chi_1,\chi_2)$ in case (ii) and the irreducible quotient of $I(\chi_1,\chi_2)$ in case (iii) of Lemma \ref{lem-criterion-irr-Ind} are called special representations.

If $I(\chi_1,\chi_2)$ is irreducible, we write $\pi(\chi_1,\chi_2)=I(\chi_1,\chi_2)$. Otherwise, we define $\pi(\chi_1,\chi_2)$ to be the corresponding special representation.

For two pairs of characters $(\chi_1, \chi_2)$ and $(\tau_1, \tau_2)$, the representations $\pi(\chi_1,\chi_2)$ and $\pi(\tau_1,\tau_2)$ are isomorphic if and only if $(\chi_1, \chi_2)=(\tau_1,\tau_2)$ or $(\chi_1, \chi_2)=(\tau_2,\tau_1)$. In particular, for any special representation $\pi$ in case (ii) of Lemma \ref{lem-criterion-irr-Ind}, there is exactly one special representation $\pi^\prime$ in case (iii) such that
$\pi\cong\pi^\prime$.

The representation $\pi(\abs{\,\cdot\,}^{1/2},\abs{\,\cdot\,}^{-1/2})$ is called the Steinberg representation.

Denote by $\pi(\chi_1,\chi_2)^\vee$ the contragredient representation of $\pi(\chi_1,\chi_2)$. We have
\[
\pi(\chi_1,\chi_2)^\vee \cong \pi(\chi_1^{-1},\chi_2^{-1}).
\]

Also, it is not hard to show that
\[
\pi(\chi_1,\chi_2)\otimes (\chi\circ \det) \cong \pi(\chi\chi_1,\chi\chi_2)
\]
for any character $\chi$ of $F^\times$.

The  classification of irreducible representation of $G$ is given in the following lemma.

\begin{lemma} \label{classification} (\cite[Theorem 6.13.4]{GH})
Let $\pi$ be an irreducible admissible representation of $G=\GL_2(F)$. Then $\pi$ is isomorphic to one of the following disjoint types, where $\chi$, $\chi_1$ and $\chi_2$ are characters of $F^\times$:\\
(i) an irreducible principal series representation $\pi(\chi_1,\chi_2)$, where $\chi_1 \chi_2^{-1}\neq\abs{\,\cdot\,}^{\pm 1}$;\\
(ii) a special representation, which we may write in the form $\pi(\chi\abs{\,\cdot\,}^{1/2},\chi\abs{\,\cdot\,}^{-1/2})$;\\
(iii) a supercuspidal representation;\\
(iv) a one-dimensional representation, of the form $\chi \circ \det$.
\end{lemma}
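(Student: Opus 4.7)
My plan is to prove the classification by the standard Jacquet module argument. Let $N$ denote the unipotent radical of $B$ and $T$ the diagonal torus. Given an irreducible admissible representation $\pi$ of $G$, the first dichotomy is whether the Jacquet module $\pi_N$ vanishes. If $\pi_N = 0$ then $\pi$ is by definition supercuspidal, giving type (iii). Otherwise $\pi_N \neq 0$, and I would first argue that $\pi_N$ is finite-dimensional as a $T$-module, using admissibility of $\pi$ together with the fact that $\pi_N$ is a finitely generated admissible $T$-module. Since $T$ is abelian, $\pi_N$ then admits a character quotient $\chi_1 \otimes \chi_2$. Inflating this to $B$ and applying Frobenius reciprocity with the appropriate $\delta_B^{1/2}$ normalization built into $I(\chi_1, \chi_2)$, I would obtain a nonzero, hence injective, $G$-map $\pi \hookrightarrow I(\chi_1, \chi_2)$.

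With this embedding in hand, I would invoke Lemma \ref{lem-criterion-irr-Ind} in three subcases. If $\chi_1 \chi_2^{-1} \neq |\cdot|^{\pm 1}$, then $I(\chi_1, \chi_2)$ is irreducible, so $\pi \cong \pi(\chi_1, \chi_2)$ (type (i)). If $\chi_1 \chi_2^{-1} = |\cdot|$, the only irreducible subrepresentation of $I(\chi_1, \chi_2)$ is the codimension-$1$ special subrepresentation, and $\pi$ must coincide with it; rewriting $\chi_1 = \chi|\cdot|^{1/2}$ and $\chi_2 = \chi|\cdot|^{-1/2}$ puts $\pi$ in the form of type (ii). If $\chi_1 \chi_2^{-1} = |\cdot|^{-1}$, then the unique irreducible subrepresentation is the $1$-dimensional subrepresentation, necessarily of the form $\chi' \circ \det$ (type (iv)). Thus every irreducible admissible $\pi$ falls into one of the four listed types.

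For disjointness I would use the dimension of $\pi_N$ together with the total dimension of $\pi$ as distinguishing invariants: supercuspidals have $\pi_N = 0$; finite-dimensional representations are one-dimensional; special representations are infinite-dimensional with $\dim \pi_N = 1$; and irreducible principal series are infinite-dimensional with $\dim \pi_N = 2$. The main technical hurdle I anticipate is carefully establishing the finite-dimensionality of $\pi_N$ and keeping track of the $\delta_B^{1/2}$ normalization so that the Frobenius reciprocity embedding lands in the induction $I(\chi_1, \chi_2)$ as defined in the excerpt rather than in an unnormalized or oppositely-twisted variant; once this bookkeeping is correct, the remainder follows mechanically from Lemma \ref{lem-criterion-irr-Ind}.
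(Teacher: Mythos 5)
This lemma is not proved in the paper at all: it is quoted directly from \cite[Theorem 6.13.4]{GH}, so there is no internal argument to compare against. Your Jacquet-module proof is the standard textbook route (essentially the one in \cite{JL} and \cite{GH}) and its overall logic is sound: the dichotomy $\pi_N=0$ versus $\pi_N\neq 0$ isolates the supercuspidals, and in the second case Frobenius reciprocity embeds $\pi$ into some $I(\chi_1,\chi_2)$, after which Lemma \ref{lem-criterion-irr-Ind} sorts out the three remaining types. What you gain is a self-contained derivation from Lemma \ref{lem-criterion-irr-Ind}; what it costs is that a few inputs you invoke are themselves nontrivial standard facts that deserve explicit citation rather than assertion: (a) that $\pi_N$ is a finitely generated admissible, hence finite-dimensional, $T$-module is Jacquet's theorem (available e.g.\ in \cite{Ca2} or \cite{BZ2}); (b) your claim that the codimension-one special subrepresentation is \emph{the only} irreducible subrepresentation of $I(\chi_1,\chi_2)$ when $\chi_1\chi_2^{-1}=|\cdot|$ (and dually in the other reducible case) uses indecomposability of these induced representations, which Lemma \ref{lem-criterion-irr-Ind} does not state --- though for the mere exhaustiveness of the four types this does not matter, since either constituent already appears on the list; (c) that a finite-dimensional smooth irreducible representation of $\GL_2(F)$ is of the form $\chi\circ\det$ (unipotents act trivially, so the representation factors through the determinant) should be said; and (d) the disjointness invariants $\dim\pi_N\in\{0,1,2\}$ rest on the computation of the Jacquet module of $I(\chi_1,\chi_2)$ (the geometric lemma), not just on Lemma \ref{lem-criterion-irr-Ind}. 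With those references supplied, your argument is a complete and correct substitute for the citation.
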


\subsection{Theta lifting of nontrivial representations} \label{subsec-nontrivial rep}
The Godement-Jacquet L-functions for all irreducible representation $\pi$ of $\GL_2(F)$ are listed as follows (see \cite[Chapter I]{JL}).
\begin{itemize}
\item[(i)] For an irreducible principal series $\pi=\pi(\chi_1,\chi_2)$ (where $\chi_1 \chi_2^{-1}\neq\abs{\,\cdot\,}^{\pm 1}$),
\[
L(s,\pi)=\frac{1}{(1-\alpha_1q^{-s})(1-\alpha_2q^{-s})},
\]
where $\alpha_i=\chi_i(\varpi)$ if $\chi_i$ is unramified and $\alpha_i=0$ if $\chi_i$ is ramified.
\item[(ii)] For a special representation $\pi=\pi(\chi\abs{\,\cdot\,}^{1/2},\chi\abs{\,\cdot\,}^{-1/2})$,
\[
L(s,\pi)=\frac{1}{1-\alpha q^{-s}},
\]
where $\alpha=\chi(\varpi)\abs\varpi^{1/2}=q^{-1/2}\chi(\varpi)$ if $\chi$ is unramified and $\alpha=0$ if $\chi$ is ramified.
\item[(iii)] If $\pi$ is supercuspidal, then
\[
L(s,\pi)=1.
\]
\item[(iv)] For a one-dimensional representation $\pi=\chi\circ\det$,
\[\begin{split}
L(s,\pi)&=\frac{1}{(1-\chi(\varpi)\abs\varpi^{-1/2}q^{-s})(1-\chi(\varpi)\abs\varpi^{1/2}q^{-s})}\\
&= \frac{1}{(1-\chi(\varpi)q^{-s+1/2})(1-\chi(\varpi)q^{-s-1/2})}
\end{split}\]
if $\chi$ is unramified, and
\[
L(s,\pi)=1,
\]
if $\chi$ is ramified.
\end{itemize}

A direct calculation shows that if $\pi$ is not the trivial representation, then $L(s,\pi)$ has no pole at $s=1/2$, or $L(s,\pi^\vee)$ has no pole at $s=1/2$. Therefore, by \cite[Corollary 1.5]{FSX}, we have
\begin{thm}
If $\pi$ is not the trivial representation, then $\Theta(\pi)$ is irreducible.
\end{thm}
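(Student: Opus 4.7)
The plan is to verify, for each of the four classes of irreducible representations listed in Lemma \ref{classification}, that at least one of the L-functions $L(s,\pi)$ or $L(s,\pi^\vee)$ is regular at $s=1/2$, and then invoke \cite[Corollary 1.5]{FSX} to conclude irreducibility of $\Theta(\pi)$.

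First I would inspect where poles of $L(s,\pi)$ at $s=1/2$ can occur, by reading off the explicit formulas above. A pole at $s=1/2$ can only arise from an unramified inducing character, and translates to a concrete condition on the character: for the principal series $\pi(\chi_1,\chi_2)$ it forces some $\chi_i=\abs{\,\cdot\,}^{-1/2}$, for the special representation $\pi(\chi\abs{\,\cdot\,}^{1/2},\chi\abs{\,\cdot\,}^{-1/2})$ it forces $\chi=\abs{\,\cdot\,}^{-1}$, and for the one-dimensional representation $\chi\circ\det$ it forces $\chi=\mathbf{1}$ or $\chi=\abs{\,\cdot\,}^{-1}$. For supercuspidals $L(s,\pi)=1$ and there is nothing to check.

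Next I would use the identity $\pi(\chi_1,\chi_2)^\vee\cong\pi(\chi_1^{-1},\chi_2^{-1})$ (noted already in Section \ref{sec-rep-GL2}) to read off the analogous pole condition for $L(s,\pi^\vee)$, and then check case by case that both conditions fail simultaneously only for the trivial representation. Concretely: in the irreducible principal series case, both conditions holding would force $\{\chi_1,\chi_2\}=\{\abs{\,\cdot\,}^{1/2},\abs{\,\cdot\,}^{-1/2}\}$, violating $\chi_1\chi_2^{-1}\neq\abs{\,\cdot\,}^{\pm 1}$; in the special case, the two conditions $\chi=\abs{\,\cdot\,}^{-1}$ and $\chi=\abs{\,\cdot\,}$ are incompatible; and in the one-dimensional case, both hold precisely when $\chi=\mathbf{1}$, i.e.\ when $\pi$ is trivial.

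Combining these with \cite[Corollary 1.5]{FSX}, which guarantees $\Theta(\pi)$ is irreducible whenever at least one of $L(s,\pi)$ or $L(s,\pi^\vee)$ is holomorphic at $s=1/2$, yields the theorem. The analysis is entirely bookkeeping, but the delicate point is the one-dimensional case, where one of the two L-functions genuinely acquires a pole at $s=1/2$ for every unramified $\chi\circ\det$, and both L-functions have a pole precisely at $\chi=\mathbf{1}$; this is exactly the degenerate case that will be treated separately later in the paper and explains why the trivial representation must be excluded from the present statement.
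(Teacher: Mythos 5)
Your proposal is correct and follows essentially the same route as the paper: the paper's proof is precisely the observation that, by the explicit Godement--Jacquet $L$-functions, $L(s,\pi)$ or $L(s,\pi^\vee)$ is holomorphic at $s=1/2$ whenever $\pi$ is nontrivial, combined with \cite[Corollary 1.5]{FSX}; your case-by-case bookkeeping fills in exactly that calculation. One small inaccuracy in your closing remark: it is not true that one of the two $L$-functions has a pole at $s=1/2$ for \emph{every} unramified $\chi\circ\det$ (e.g.\ for a nontrivial unitary unramified $\chi$ one has $\chi(\varpi)\notin\{1,q,q^{-1}\}$, so neither has a pole), but this aside does not affect your argument, since the essential point --- that both $L(s,\pi)$ and $L(s,\pi^\vee)$ have poles at $s=1/2$ only when $\chi=\mathbf{1}$ --- is verified correctly.
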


\subsection{Theta lifting of trivial representation} \label{subsec-trivial rep}

It remains to study the case that $\pi$ is a trivial representation.

Put $G=G^\prime=\GL_2(F)$. Let $\chi$ be a character of $G$.
For a $G\times G^\prime$-invariant locally closed subset $X$ of $M_2:=M_{2\times 2}(F)$, define the normalized $\chi$-coinvariant of $\CS(X)$ by
\[
\CS_\chi(X):=\left(\left(\CS(X)\otimes\left(\abs\det^{-1}\boxtimes\abs\det\right)\right)\otimes\chi^{-1}\right)_{G},
\]
recalling that $\CS(X)$ is the space of Schwartz-Bruhat functions on $X$, and $(\,\cdot\,)_{G}$ stands for the maximal quotient space which is invariant under the action of $G$. Then $\CS_\chi(X)$ carries an action of $G^\prime$. In particular,
\be
\CS_\chi(M_2)\cong \Theta(\chi).
\ee
By \cite[Propositon 1.8]{BZ1}, we have the following exact sequences
\be
\CS_\chi(\CO_1) \to \CS_\chi(\overline\CO_1) \to \CS_\chi(\{0\}) \to 0,
\ee
and
\be
\CS_\chi(\CO_2) \to \CS_\chi(M_2) \to \CS_\chi(\overline\CO_1) \to 0,
\ee
where $\CO_i$ is the subset of $M_2$ which consists of matrices with rank $i$ ($i=0,1,2$).
Write $\CS^*(X)$ for the set of tempered distributions on $X$, namely the space of continuous linear functionals on $\CS(X)$. Define the normalized $(G,\chi)$-invariant of $\CS^*(X)$ by
\[
\CS^*(X)^\chi:=\left(\left(\CS^*(X)\otimes\left(\abs\det\boxtimes\abs\det^{-1}\right)\right)\otimes\chi^{-1}\right)^{G},
\]
where $(\,\cdot\,)^{G}$ denotes the $G$-invariant subspace. Obviously,
\be \label{eqn-dual}
\CS^*(X)^\chi\cong \left(\CS_{\chi^{-1}}(X)\right)^*.
\ee
By duality, we have exact sequences
\be \label{exact-rank1}
0 \to \CS^*(\{0\})^\chi \to\CS^*(\overline\CO_1)^\chi \xrightarrow{\rj_1} \CS^*(\CO_1)^\chi,
\ee
and
\be \label{exact-rank2}
0 \to \CS^*(\overline\CO_1)^\chi \to\CS^*(M_2)^\chi \xrightarrow{\rj_2} \CS^*(\CO_2)^\chi.
\ee

\begin{defi}
Let $\chi$ be a character of a group $G$ and let $V$ be a (non-necessary smooth) representation of $G$. A vector $v\in V$ is called a generalized $(G,\chi)$-invariant (or simply generalized $\chi$-invariant) vector if there is a non-negative integer $k$ such that
\[
(g_0-\chi(g_0))(g_1-\chi(g_1))\cdots(g_k-\chi(g_k)).v=0
\]
for all $g_0,g_1,\cdots,g_k\in G$.
\end{defi}

Let $X$ be a $G\times G^\prime$-invariant locally closed subset of $M_2$.  Denote by $\CS^*(X)^{\chi,\infty}$ the space of generalized $(G,\chi)$-invariant vectors in
$$\CS^*(X)\otimes\left(\abs\det\boxtimes\abs\det^{-1}\right).$$
Apparently,
\[
\CS^*(X)^\chi \subset \CS^*(X)^{\chi,\infty}.
\]
We also have the following exact sequences
\be \label{exact-inf-rank1}
0 \to \CS^*(\{0\})^{\chi,\infty} \to\CS^*(\overline\CO_1)^{\chi,\infty} \xrightarrow{\rj_1^\prime} \CS^*(\CO_1)^{\chi,\infty},
\ee
and
\be \label{exact-inf-rank2}
0 \to \CS^*(\overline\CO_1)^{\chi,\infty} \to\CS^*(M_2)^{\chi,\infty} \xrightarrow{\rj_2^\prime} \CS^*(\CO_2)^{\chi,\infty}.
\ee

Note that $\CS^*(\{0\})=\C\delta_0$, where $\delta_0$ is the Dirac distribution. It is easy to check that $\CS^*(\{0\})^\chi=\CS^*(\{0\})^{\chi,\infty}\neq 0$ if and only if $\chi=\abs{\det}$.

From now on, we always set $\chi=1$ to be the trivial representation. The following lemma is an immediate consequence of \cite[Theorem 1.4]{HS}.
\begin{lemma} \label{j1-surjective}
The maps $\rj_1$ defined in the exact sequence \eqref{exact-rank1} and $\rj_1^\prime$ defined in \eqref{exact-inf-rank1} are isomorphisms.
\end{lemma}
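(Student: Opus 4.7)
The plan is to split the argument into two standard pieces: injectivity, which is built into the exact sequences \eqref{exact-rank1} and \eqref{exact-inf-rank1}, and surjectivity, which is the non-trivial content and rests on \cite[Theorem 1.4]{HS}.

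First, injectivity of both $\rj_1$ and $\rj_1'$ follows from \eqref{exact-rank1} and \eqref{exact-inf-rank1} once we check that $\CS^*(\{0\})^{\chi} = \CS^*(\{0\})^{\chi,\infty} = 0$ when $\chi = 1$. Since $\CS^*(\{0\}) = \C\delta_0$ is one-dimensional, this reduces to observing that the diagonal $G$-action on $\C\delta_0 \otimes (\abs{\det} \boxtimes \abs{\det}^{-1})$ is by the character $\abs{\det}$ (and on a one-dimensional space, generalized $\chi$-invariants automatically coincide with genuine $\chi$-invariants), while $\abs{\det} \neq 1$, as already noted in the paragraph preceding the lemma.

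For surjectivity I would invoke \cite[Theorem 1.4]{HS} directly. That theorem provides a criterion under which a (generalized) $(G,\chi)$-semi-invariant distribution on the open stratum $\CO_1$ of $\overline\CO_1$ extends across the closed stratum $\{0\}$, expressed in terms of the characters through which $G$ acts on the graded pieces of the conormal bundle at the origin. After translating through the normalization twist by $\abs{\det} \boxtimes \abs{\det}^{-1}$, the only character of $G$ for which the criterion fails is $\abs{\det}$; since here $\chi = 1 \neq \abs{\det}$, the hypothesis of \cite[Theorem 1.4]{HS} is met, and the same theorem handles both the ordinary and the generalized version uniformly, yielding surjectivity of $\rj_1$ and $\rj_1'$ simultaneously.

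The main obstacle in writing this up carefully will be matching the conventions used in \cite{HS} with those here, in particular keeping track of the twist by $\abs{\det} \boxtimes \abs{\det}^{-1}$ coming from the Weil-representation normalization and verifying that the identical ``bad character'' $\abs{\det}$ governs both the kernel computation and the extension obstruction. Once that bookkeeping is done, the lemma reduces to a one-line citation.
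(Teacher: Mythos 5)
Your proposal matches the paper's own proof: injectivity comes from the exact sequences \eqref{exact-rank1} and \eqref{exact-inf-rank1} together with the vanishing $\CS^*(\{0\})^\chi=\CS^*(\{0\})^{\chi,\infty}=0$ for $\chi=1$ (noted just before the lemma), and surjectivity is exactly the citation of \cite[Theorem 1.4]{HS}. Your extra gloss on how that theorem's criterion is checked is speculative but harmless; the argument itself is the same as the paper's.
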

\bp
By the previous paragraph,
$$\CS^*(\{0\})^\chi=\CS^*(\{0\})^{\chi,\infty}= 0.$$
Thus $\rj_1$ and $\rj_1^\prime$ are injective. On the other hand, \cite[Theorem 1.4]{HS} implies that the two maps are surjective. Therefore, the Lemma follows.
\ep

\begin{lemma} \label{rank1-nonzero}
The space $\CS^*(\CO_1)^\chi$ is nonzero.
\end{lemma}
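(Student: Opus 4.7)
The strategy will be to exhibit an explicit nonzero element of $\CS^*(\CO_1)^\chi$ for $\chi = 1$. Unwinding the definition, I need a distribution $T \in \CS^*(\CO_1)$ satisfying the transformation law $T(f(g_1 \cdot)) = |\det g_1|^{-1} T(f)$ under left multiplication on $M_2$ by $g_1 \in G$; the remaining $G'$-action on the normalization is irrelevant for membership in $\CS^*(\CO_1)^\chi$.

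The geometric input I plan to use is that $\CO_1$ contains a convenient $G$-stable closed slice. Writing $e_1^T = (1,0)$, the subset
\[
Y := \{v \cdot e_1^T : v \in F^2 \setminus \{0\}\} \subset \CO_1
\]
consisting of rank-one matrices whose second column vanishes is closed in $\CO_1$ (it is cut out of $\CO_1$ by the linear equations $M_{12} = M_{22} = 0$ and does not touch the origin of $M_2$), is preserved by left multiplication by $G$, and the map $v \mapsto v \cdot e_1^T$ identifies it $G$-equivariantly with $F^2 \setminus \{0\}$ equipped with the standard linear $G$-action. I will then define $T$ as the pushforward of additive Haar measure along this closed embedding, namely
\[
T(f) := \int_{F^2} f(v \cdot e_1^T)\, dv.
\]

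The three verifications are all routine. Well-definedness follows from the fact that the support of any $f \in \CS(\CO_1)$ is compact in $\CO_1$ and hence bounded away from $0 \in M_2$, so the integrand has compact support in $F^2 \setminus \{0\}$; nonvanishing is witnessed by testing against the characteristic function of a small neighbourhood of $e_1 \cdot e_1^T$; and the required transformation law reduces to the Jacobian identity $d(g_1 v) = |\det g_1|\, dv$ for Haar measure on $F^2$, which produces exactly the factor $|\det g_1|^{-1}$ demanded by the normalization. There is no serious obstacle here; the only piece of creativity is recognising that a closed $G$-stable slice with the right modular character exists in $\CO_1$, and the slice $Y$ above does the job.
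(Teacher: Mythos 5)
Your construction is correct and is essentially the paper's argument: your closed slice $Y$ is precisely the closed $G$-orbit of $\begin{pmatrix}1&0\\0&0\end{pmatrix}$ in $\CO_1$ (identified in the paper with $G/G_1$), and your pushforward of Haar measure on $F^2$ is exactly the relatively invariant functional, with the correct twist $\abs{\det}^{-1}$, whose existence the paper deduces from the modular-character criterion $\chi_0\vert_{G_1}=\Delta_{G/G_1}$ together with the closedness of the orbit in $\CO_1$. So the only difference is that you exhibit the invariant distribution explicitly rather than invoking the general criterion for $\Hom_G(\CS(G/G_1),\chi_0)$, which is a harmless simplification.
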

\begin{proof}
Decompose $\CO_1$ into disjoint union of $G$-orbits as
\[
\CO_1= \bigcup_{a\in F} G \begin{pmatrix} 1 & a \\ 0 & 0 \end{pmatrix} \bigcup G\begin{pmatrix} 0 &1\\ 0 & 0 \end{pmatrix}.
\]
For each $(a,b)\neq (0,0)$, the stabilizer of $\begin{pmatrix} a & b \\ 0 & 0 \end{pmatrix}$ under the action of $G$ is
\[
G_1:=\left\{\begin{pmatrix} 1 & x \\ 0 & y \end{pmatrix}\middle\vert x\in F, y\in F^\times\right\}.
\]
Thus
\[
G/G_1 \simeq G \begin{pmatrix} a & b \\ 0 & 0 \end{pmatrix} \quad \mbox{for all }  (a,b)\neq (0,0).
\]
Let $\chi_0$ be a character of $G$.
It is well-known that the space
$$\Hom_{G}(\CS(G/G_1), \chi_0)\neq 0$$
if and only if $\chi_0\vert_{G_1} =\Delta_{G/G_1}$,
where $\Delta_{G/G_1}:=\frac{\Delta_{G}}{\Delta_{G_1}}$ is the ratio of modular characters of $G$ and $G_1$.
Also
\[
\Delta_{G/G_1}\begin{pmatrix} 1 & x \\ 0 & y \end{pmatrix}=\abs y \quad \mbox{for all } \begin{pmatrix} 1 & x \\ 0 & y \end{pmatrix}\in G_1.
\]
Thus $\Hom_{G}(\CS(G/G_1)\otimes \abs \det^{-1}, \chi)\neq 0$. Now the closeness of $G/G_1 \simeq G \begin{pmatrix} a & b \\ 0 & 0 \end{pmatrix}$ (where $(a,b)\neq (0,0)$) in
$\CO_1$ implies  that the space
\[
\begin{split}
\CS^*(\CO_1)^\chi &= \left(\left(\CS^*(\CO_1)\otimes\left(\abs\det\boxtimes\abs\det^{-1}\right)\right)\otimes\chi^{-1}\right)^{G} \\
& = \left(\CS^*(\CO_1)\otimes \abs\det\otimes\chi^{-1}\right)^{G} \boxtimes\abs\det^{-1} \\
& \cong \Hom_{G}\left(\CS(\CO_1)\otimes \abs \det^{-1}, \chi^{-1}\right)\boxtimes\abs\det^{-1}
\end{split}
\]
is nonzero.
\end{proof}

\begin{lemma} \label{lem-O_1-inf=O_1}
We have the following equality
\[
\CS^*(\CO_1)^\chi=\CS^*(\CO_1)^{\chi,\infty}.
\]
\end{lemma}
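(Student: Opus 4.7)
The inclusion $\CS^*(\CO_1)^\chi\subseteq\CS^*(\CO_1)^{\chi,\infty}$ is immediate from the definitions. For the reverse, the most promising route is a direct appeal to \cite[Theorem 1.4]{HS}, which is the same rigidity result that supplied the surjectivity of $\rj_1$ and $\rj_1'$ in Lemma \ref{j1-surjective}. Applied to $\CO_1$ --- whose $G$-orbits are all closed, isomorphic to $G/G_1$ and parametrized by $[v]\in\mathbb{P}^1(F)$ with stabilizer $G_1\cong F\rtimes F^\times$ as in the proof of Lemma \ref{rank1-nonzero} --- this kind of theorem is tailored precisely to rule out ``Jordan-block'' extensions of the trivial character within the induced model.

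If a direct citation is not enough, my fallback plan is to induct on the depth of the generalized invariance. Write $V:=\CS^*(\CO_1)\otimes(\abs\det\boxtimes\abs\det^{-1})$, so the assertion reads $V^G=V^{G,\infty}$. Take $T\in V^{G,\infty}$ of minimal depth $k\geq 1$; for each $g\in G$ the distribution $(g-1)T$ has depth at most $k-1$ and so lies in $V^G$ by induction. Since every element of $V^G$ is $G$-fixed (as $\chi=1$), the identity $(gh-1)=g(h-1)+(g-1)$ yields
\[
\phi(gh)=g\cdot\phi(h)+\phi(g)=\phi(h)+\phi(g)
\]
with $\phi(g):=(g-1)T$, so $\phi:G\to V^G$ is a homomorphism into the abelian group $(V^G,+)$. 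It therefore factors through $G^{\ab}\cong F^\times$ via the determinant (using $[\GL_2(F),\GL_2(F)]=\SL_2(F)$), and descends to a homomorphism $\tilde\phi:F^\times\to V^G$; the lemma becomes the assertion $\tilde\phi\equiv 0$.

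The hard part will be exactly this vanishing of $\tilde\phi$. My plan is to combine two constraints. First, the Frobenius reciprocity description of $V^G$ from the proof of Lemma \ref{rank1-nonzero} realizes $V^G$ explicitly as an induced representation attached to $G_1\cong F\rtimes F^\times$, whose small supply of additive cocycles is easy to enumerate. Second, the center $Z(G)=F^\times\cdot I_2$ acts on $V$ by a single character determined by the twist $\abs\det$ together with the scaling action on $\CO_1$, so $V^G$ is confined to one $Z(G)$-character eigenspace; evaluating the cocycle identity on central elements forces $\tilde\phi$ to vanish on $\det(Z(G))=(F^\times)^2$, and the remaining vanishing is then extracted from the $G'$-module structure of $V^G$ and the explicit form of $G_1$. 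The bulk of the work lies in this last step; the preceding induction and cocycle manipulation are routine, and the principal obstacle is making the rigidity of $\tilde\phi$ precise enough that one of the two strategies above --- direct application of \cite[Theorem 1.4]{HS} or the central-character analysis --- actually closes out the argument.
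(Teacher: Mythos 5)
Your proposal does not close the essential gap. Your first route miscites the input: \cite[Theorem 1.4]{HS} is the extension/surjectivity statement used in Lemma \ref{j1-surjective} (restriction of generalized semi-invariant distributions to the open piece is onto); it is not the rigidity statement ``generalized invariance $=$ invariance''. The result of that kind which the paper actually uses is \cite[Theorem 6.16]{HS}, and it applies to a single homogeneous space $G/G_1$. But $\CO_1$ is \emph{not} $G$-homogeneous: under $G$ alone it is a continuous family of closed orbits parametrized by $\BP^1(F)$, so even with the correct citation one still needs a device to pass from the fiber $G/G_1$ to the whole family. The paper does this by covering $\CO_1$ by the two $G$-stable open subsets isomorphic to $G/G_1\times F$ and $G/G_2\times F$ and, for a generalized invariant distribution $D_0$ on such a piece, evaluating against test vectors $v\in\CS(F)$ to land in $\CS^*(G/G_i)^{\chi,\infty}=\CS^*(G/G_i)^{\chi}$; your sketch contains no substitute for this reduction.

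Your fallback is formally fine up to the point you yourself flag as hard: the depth induction and the observation that $\phi(g)=(g-1)T$ is an additive homomorphism into $V^G$ factoring through $\det$ are correct. But the proposed vanishing argument fails. The center $Z(G)$ does not act on $V=\CS^*(\CO_1)\otimes(\abs\det\boxtimes\abs\det^{-1})$ by a character (scaling a rank-one matrix is not a character twist on distributions), so no constraint like $\tilde\phi|_{(F^\times)^2}=0$ follows from the cocycle identity on central elements; and ``enumerating additive cocycles'' valued in the infinite-dimensional space $V^G$ is not carried out. More decisively, the paper's own Lemma \ref{lem-D0-D1} shows that on $M_2$ the analogous equality is false: the Laurent coefficient $Z_{\chi',0}$ is a depth-one generalized invariant whose associated cocycle is essentially $g\mapsto \mathrm{val}(\det g)\cdot Z_{\chi',-1}$, a nonzero homomorphism through $\det$ into the invariants. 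Hence homomorphisms of exactly the type you must exclude do occur in nearby situations, and no purely formal cocycle/central-character argument can rule them out on $\CO_1$; one must use the specific $G$-orbit structure of $\CO_1$ (via \cite[Theorem 6.16]{HS} fiberwise, as in the paper) or an equivalent computation of the relevant coinvariants. As it stands, the key step of your proof is missing.
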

\begin{proof}
Note that
\be
\begin{split}
\CO_1&=\bigcup_{t\in F} G \begin{pmatrix} 1 & 0 \\ 0 & 0 \end{pmatrix}\begin{pmatrix} 1 & t \\ 0 & 1 \end{pmatrix} \bigcup \bigcup_{t\in F}G\begin{pmatrix} 0 &0\\ 0 & 1 \end{pmatrix}\begin{pmatrix} 1 & 0 \\ t & 1 \end{pmatrix} \\
&\simeq G/G_1 \times F \bigcup G/G_2 \times F,
\end{split}
\ee
where
\[
G_1:=\left\{\begin{pmatrix} 1 & x \\ 0 & y \end{pmatrix}\middle\vert x\in F, y\in F^\times\right\},
\]
and
\[
G_2:=\left\{\begin{pmatrix} y & 0 \\ x & 1 \end{pmatrix}\middle\vert x\in F, y\in F^\times\right\}.
\]
It suffices to prove that on both open subsets $G/G_1 \times F$ and $G/G_2 \times F$, every generalized $\chi$-invariant distribution is $\chi$-invariant.
Let $D_0$ be a nonzero element in
\[
\Hom_{G,\infty}((\CS(G/G_1)\boxtimes\CS(F))\otimes\left(\abs\det^{-1}\boxtimes\abs\det\right),\chi^{-1}):=\CS^*(G/G_1\times F)^{\chi,\infty}.
\]
Take a nonzero test vector $v\in\CS(F)$, and let $e_v$ be the evaluation map
\[
\begin{split}
e_v:&\quad\,\,\Hom_{G,\infty}((\CS(G/G_1)\boxtimes\CS(F))\otimes\left(\abs\det^{-1}\boxtimes\abs\det\right),\chi^{-1})\\
&\to\Hom_{\C}((\CS(G/G_1))\otimes\abs\det^{-1},\chi^{-1})
\end{split},
\]
\[
e_v(D)(\phi\otimes 1):=D(\phi\otimes v\otimes 1\otimes 1),\quad \phi\in\cS(G_1/H_1).
\]
Then
\[
e_v(D_0)\in\Hom_{G,\infty}((\CS(G/G_1))\otimes\abs\det^{-1},\chi^{-1})
:=\CS^*(G/G_1)^{\chi,\infty}.
\]
By Theorem 6.16 of \cite{HS},
\[
\CS^*(G/G_1)^{\chi,\infty}=\CS^*(G/G_1)^{\chi}.
\]
It follows that
\[
D_0 \in \CS^*(G/G_1\times F)^{\chi}.
\]
With the same technique, we can prove that  every generalized $\chi$-invariant distribution on the open subset
$G/G_2 \times F$ is $\chi$-invariant, which completes the proof for the lemma.
\end{proof}

Denote by $\rj$ the natural map from $\CS^*(M_2)$ into $\CS^*(\CO_2)$. Then $\rj_2$ defined in \eqref{exact-rank2} is induced from the map
\[
\rj\otimes 1 \otimes 1: \CS^*(M_2)\otimes\left(\abs\det\boxtimes\abs\det^{-1}\right) \to \CS^*(\CO_2)\otimes\left(\abs\det\boxtimes\abs\det^{-1}\right).
\]
Since $\CO_2=\GL_2(F)$, it is clear that $\dim\CS^*(\CO_2)^\chi=1$. Moreover,
$$D_0:=\chi^{-1}( g)\abs {\det g}\od\!g\otimes 1 \otimes 1$$
is a generator of this space, where $\od\!g$ is a Haar measure on $\CO_2=\GL_2(F)$.

\begin{lemma} \label{lem-D0-D1}
Let $D_0$ be the distribution defined above.
There is an element $D_1\in \CS^*(M_2)\otimes\left(\abs\det\boxtimes\abs\det^{-1}\right)$ such that
\[
(\rj\otimes 1 \otimes 1)(D_1)=D_0,
\]
and
\[
D_1\in\CS^*(M_2)^{\chi,\infty}\setminus\CS^*(M_2)^\chi.
\]
\end{lemma}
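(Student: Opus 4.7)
The plan is to construct $D_1$ by a Tate-type meromorphic regularization of the zeta integral
\[
Z(s,f) := \int_{M_2(F)} f(M)\,\abs{\det M}^{s}\,\od M,\qquad f\in \CS(M_2),
\]
where $\od M$ is the additive Haar measure on $M_2(F)$, normalized so that $\abs{\det g}^{-1}\od M = \abs{\det g}\,\od g$ on $\CO_2=\GL_2(F)$. The integral converges absolutely for $\Re(s)>-1$, and the explicit evaluation (by decomposing $M_2(\mathfrak O)$ into Smith-type orbits)
\[
Z(s,\mathbf 1_{M_2(\mathfrak O)}) = \frac{(1-q^{-1})(1-q^{-2})}{(1-q^{-1-s})(1-q^{-2-s})}
\]
shows that $Z(s,f)$ is a rational function of $q^{-s}$ for every $f$, with a simple pole at $s=-1$ coming from the codimension-one hypersurface $\{\det = 0\}$. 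Laurent expanding,
\[
Z(s,f) = \frac{A_{-1}(f)}{s+1}+A_0(f)+O(s+1),
\]
defines distributions $A_{-1},A_0\in \CS^*(M_2)$. Since $Z(s,f)$ is entire whenever $\mathrm{supp}\,f\subset\CO_2$, the distribution $A_{-1}$ is supported on $\overline{\CO_1}$, and $A_0$ restricts on $\CO_2$ to the measure $\abs{\det g}\,\od g$.

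The change of variable $N = g_1^{-1}Mg_2$ in the zeta integral yields the transformation law
\[
Z\bigl(s,(g_1,g_2)\cdot f\bigr) = \abs{\det g_1}^{s+2}\,\abs{\det g_2}^{-s-2}\,Z(s,f).
\]
Matching Laurent coefficients at $s=-1$ gives
\[
(g_1,g_2)\cdot A_{-1} = \abs{\det g_1}^{-1}\abs{\det g_2}\cdot A_{-1},
\]
\[
(g_1,g_2)\cdot A_0 = \abs{\det g_1}^{-1}\abs{\det g_2}\Bigl(A_0+\log\abs{\det(g_2g_1^{-1})}\cdot A_{-1}\Bigr).
\]
Setting $D_1 := A_0\otimes\mathbf 1\otimes\mathbf 1$ and $B_{-1} := A_{-1}\otimes\mathbf 1\otimes\mathbf 1$ in the twisted space $\CS^*(M_2)\otimes(\abs\det\boxtimes\abs\det^{-1})$, the character twist by $\abs{\det g_1}\abs{\det g_2}^{-1}$ exactly cancels the leading factors above, so $B_{-1}$ is $(G\times G')$-invariant, while for $g\in G$ (the first factor, $g_2=1$) the action on $D_1$ reduces to
\[
g\cdot D_1 = D_1-\log\abs{\det g}\cdot B_{-1}.
\]

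From this identity one reads off: first, $(g'-1)(g-1)D_1 = -\log\abs{\det g}\cdot(g'-1)B_{-1} = 0$ for all $g,g'\in G$, so $D_1\in \CS^*(M_2)^{\chi,\infty}$ with $\chi=1$; second, taking $g=\diag(\varpi,1)$ gives $(g-1)D_1 = (\log q)\,B_{-1}\neq 0$, hence $D_1\notin \CS^*(M_2)^\chi$; third, the restriction $A_0|_{\CO_2}=\abs{\det g}\,\od g$ identifies $(\rj\otimes 1\otimes 1)(D_1)$ with $D_0$. The one substantive input is the nonvanishing of $B_{-1}$, equivalently a genuine pole of $Z(s,f)$ at $s=-1$; the explicit formula above for $f=\mathbf 1_{M_2(\mathfrak O)}$ handles this, so that point is routine rather than the main obstacle. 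The main conceptual step is recognizing that one cannot extend $\abs{\det g}\,\od g$ as a bona fide $\chi$-invariant distribution across $\{\det=0\}$ because the naive integrand $\abs{\det g}^{-1}\od M$ is not locally integrable there, and that the finite part of the regularization supplies a \emph{generalized} $\chi$-invariant extension whose failure to be strictly invariant is measured precisely by the residue $B_{-1}$.
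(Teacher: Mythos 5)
Your proposal is correct and is essentially the paper's own argument in a different normalization: the paper regularizes the Godement--Jacquet zeta integral $\int_G\phi(g)\abs{\det g}^{s+\frac12}\od g$ at $s=\tfrac12$, takes the constant Laurent coefficient as $D_1$, quotes \cite[Proposition 5.20]{HS} for the generalized invariance, and deduces nonvanishing of the residue coefficient from the simple pole of the $L$-function of the trivial character at $s=\tfrac12$, then exhibits non-invariance by the same transformation identity you derive. Your additive-measure zeta integral on $M_{2\times 2}(F)$, the direct $\log\abs{\det g}$-cocycle verification of generalized invariance, and the explicit computation with $\mathbf 1_{M_{2\times 2}(\mathfrak O_F)}$ are the same steps carried out by hand, so the two proofs coincide in substance.
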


\begin{proof}

Write $\chi^\prime=\chi^{-1}$. Recall the zeta integral
\[
Z_{\chi^\prime}(s, \phi):=\int_G \phi(g){\chi^\prime}( g)\abs{\det g}^{s+\frac{1}{2}}\od\! g.
\]
The distribution $Z_{\chi^\prime}(s, \,\cdot\,)$ has a Laurent expansion
\[
Z_{\chi^\prime}(s, \,\cdot\,)=\sum_{i=-k}^{+\infty} Z_{{\chi^\prime},i}\left(1-q^{-s+\frac 1 2}\right)^i,
\]
where the $i$-th coefficient $Z_{{\chi^\prime},i}$ is a distribution with support containing in $\overline \CO_1$ if $i<0$. The constant term $Z_{{\chi^\prime},0}$ gives an extension to $\CS(M_2)$ of
the distribution $\chi^{-1}( g)\abs {\det g}\od\!g$ on $\CS(\CO_2)$. That is
\[
(\rj\otimes 1 \otimes 1)(Z_{{\chi^\prime},0}\otimes 1 \otimes 1)=D_0.
\]
By \cite[Proposition 5.20]{HS}, we have
\[
D_1:=Z_{{\chi^\prime},0}\otimes 1 \otimes 1 \in \CS^*(M_2)^{\chi,\infty}.
\]
We need to show that $D_1$ is not contained in $\CS^*(M_2)^\chi$.

Note that $\chi^\prime=\chi^{-1}= 1$. The L-function listed in the previous section shows that $\frac 1 2$ is a simple pole of $L(s, \chi^\prime)$. Thus
$ Z_{{\chi^\prime},i}=0$ if $i<-1$ and $ Z_{{\chi^\prime}, -1}\neq0$.

Let $g_0$ be an element in $G$ satisfying that $\det {g_0}=\varpi^{-1}$. Then
\[
g_0 .Z_{\chi^\prime}(s, \,\cdot\,)= q^{-s-\frac 1 2}Z_{\chi^\prime}(s, \,\cdot\,).
\]
Compare the Laurent expansions of the two sides of this equality term by term, we obtain
\[
g_0. Z_{{\chi^\prime}, 0} - \abs {\det{g_0}}^{-1} Z_{{\chi^\prime}, 0}= -q^{-1}Z_{{\chi^\prime}, -1}.
\]
Now define $D_1=Z_{{\chi^\prime}, 0}\otimes 1 \otimes 1 \in \CS^*(M_2)\otimes\left(\abs\det\boxtimes\abs\det^{-1}\right)$. Then
\[
\begin{split}
(\chi(g_0)-g_0)(Z_{{\chi^\prime}, 0}\otimes 1 \otimes 1)&=Z_{{\chi^\prime}, 0}\otimes 1 \otimes 1-g_0.\abs{\det{g_0}}Z_{{\chi^\prime}, 0}\otimes 1 \otimes 1\\
&=Z_{{\chi^\prime}, -1}\otimes 1 \otimes 1\neq0.
\end{split}
\]
Thus $D_1$ is not $(G,\chi)$-invariant, as is desired.
\end{proof}

\begin{prop} \label{prp-S(M2)-iso-S(O1)}
We have the following isomorphism
$$\CS^*(\CO_1)^\chi \cong \CS^*(M_2)^\chi.$$
\end{prop}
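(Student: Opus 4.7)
The plan is to show that the map $\rj_2 : \CS^*(M_2)^\chi \to \CS^*(\CO_2)^\chi$ in the exact sequence \eqref{exact-rank2} is identically zero. Once this is established, the sequence \eqref{exact-rank2} gives $\CS^*(\overline\CO_1)^\chi \cong \CS^*(M_2)^\chi$, and combining with Lemma \ref{j1-surjective} (which provides $\CS^*(\overline\CO_1)^\chi \cong \CS^*(\CO_1)^\chi$ via $\rj_1$) yields the desired isomorphism.

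The key preliminary observation is that the $\chi$-invariants and generalized $\chi$-invariants agree on $\overline\CO_1$. Indeed, by Lemma \ref{lem-O_1-inf=O_1} we have $\CS^*(\CO_1)^\chi = \CS^*(\CO_1)^{\chi,\infty}$, and Lemma \ref{j1-surjective} shows that both $\rj_1$ and $\rj_1'$ are isomorphisms. Chasing through \eqref{exact-rank1} and \eqref{exact-inf-rank1}, it follows that
\[
\CS^*(\overline\CO_1)^\chi = \CS^*(\overline\CO_1)^{\chi,\infty}.
\]

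Now suppose for contradiction that $\rj_2 \neq 0$, so there exists $D \in \CS^*(M_2)^\chi$ with $\rj_2(D) = c D_0$ for some nonzero scalar $c$, where $D_0$ is the generator of $\CS^*(\CO_2)^\chi$ described before Lemma \ref{lem-D0-D1}. Viewing $D$ inside $\CS^*(M_2)^{\chi,\infty}$, the maps $\rj_2$ and $\rj_2'$ coincide on $D$, so $\rj_2'(D) = c D_0$. By Lemma \ref{lem-D0-D1}, $\rj_2'(D_1) = D_0$ as well, hence
\[
D - c D_1 \in \ker \rj_2' = \CS^*(\overline\CO_1)^{\chi,\infty} = \CS^*(\overline\CO_1)^\chi,
\]
using the equality derived above. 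The right-hand side consists of genuinely $\chi$-invariant distributions, and $D$ itself is $\chi$-invariant by hypothesis; therefore $c D_1 = D - (D - c D_1)$ would be $\chi$-invariant. Since $c \neq 0$, this contradicts the conclusion of Lemma \ref{lem-D0-D1} that $D_1 \notin \CS^*(M_2)^\chi$. Hence $\rj_2 = 0$, completing the argument.

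The main obstacle, already resolved by the preceding lemmas, is the identification $\CS^*(\overline\CO_1)^\chi = \CS^*(\overline\CO_1)^{\chi,\infty}$: without it, the distribution $D - c D_1$ would only be a generalized invariant and could fail to be invariant, so that subtracting it from the invariant $D$ would not force $c D_1$ to be invariant. The nontrivial input going into that identification is Theorem 6.16 of \cite{HS} on the $\CO_1$ stratum, together with the fact that no invariant (or generalized invariant) distribution is supported at the origin in the trivial character case.
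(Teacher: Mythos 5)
Your argument is correct and follows essentially the same route as the paper: both reduce the statement to showing $\rj_2=0$, and both derive the contradiction from the distribution $D_1$ of Lemma \ref{lem-D0-D1} together with the coincidence of invariants and generalized invariants on the closed stratum (Lemmas \ref{j1-surjective} and \ref{lem-O_1-inf=O_1}). Your explicit isolation of the equality $\CS^*(\overline\CO_1)^\chi=\CS^*(\overline\CO_1)^{\chi,\infty}$ is just a slightly more transparent packaging of the paper's diagram chase, so there is nothing to correct.
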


\begin{proof}
In view of Lemma \ref{j1-surjective} and \ref{rank1-nonzero}, it suffices to prove that the map $\rj_2$ defined in the exact sequence \eqref{exact-rank2} is zero.

By Theorem 1.5 of \cite{HS}, the map $\rj_2^\prime$ defined in \eqref{exact-inf-rank2} is surjective. By Lemma
\ref{j1-surjective} and Lemma \ref{lem-O_1-inf=O_1}, we have the following commutative diagram of exact sequences
\[
\begin{CD}
0 @>>> \CS^*(\CO_1)^\chi @>>>\CS^*(M_2)^\chi @>\rj_2>> \CS^*(\CO_2)^\chi @.\\
@. @|  @VVV @VVV @.\\
0 @>>> \CS^*(\CO_1)^{\chi,\infty} @>>>\CS^*(M_2)^{\chi,\infty} @>\rj_2^\prime>> \CS^*(\CO_2)^{\chi,\infty} @>>>0
\end{CD}
\]
Let $D_0, D_1$ be the distributions defined in Lemma \ref{lem-D0-D1}. Since $D_0$ is a nonzero element of the one dimensional space $\CS^*(\CO_2)^\chi$, we only need to show that there is no $D\in\CS^*(M_2)^\chi$ such that $\rj_2(D)=D_0$. Assume that $D_2$ is an element in $\CS^*(M_2)^\chi$ with $\rj_2(D_2)=D_0$. Also $\rj_2^\prime(D_2)=D_0$, and
\[
\rj_2^\prime(D_1-D_2)=0.
\]
Lemma \ref{lem-D0-D1} implies that
\[
D_1-D_2 \in \CS^*(M_2)^{\chi,\infty}\setminus\CS^*(M_2)^\chi.
\]
It follows that
\[
D_1-D_2 \in \CS^*(\CO_1)^{\chi,\infty}\setminus\CS^*(\CO_1)^\chi,
\]
which contradicts to Lemma \ref{lem-O_1-inf=O_1}. Therefore,
$$\CS^*(\CO_1)^\chi \cong \CS^*(M_2)^\chi.$$

\end{proof}

Now we are prepared to state the theorem about the full theta lift of the trivial representation.
\begin{thm} \label{trivial-rep}
If $\pi$ is the trivial representation of $\GL_2(F)$, then $\Theta(\pi)$ is isomorphic to an induce representation of length $2$. More precisely, $\Theta(\pi)$ has a unique irreducible subrepresentation which is isomorphic to the Steinberg representation, and the corresponding quotient is the trivial representation.
\end{thm}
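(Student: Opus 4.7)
The strategy is to combine Proposition \ref{prp-S(M2)-iso-S(O1)} with an explicit identification of $\CS_1(\CO_1)$. Specializing the duality \eqref{eqn-dual} to $\chi=1$, the proposition gives a $G'$-equivariant isomorphism $\Theta(\mathbf{1})^{*}\cong \CS_1(\CO_1)^{*}$ of algebraic duals. Once $\CS_1(\CO_1)$ is identified as a parabolically induced representation (hence admissible of finite length), and recalling that $\Theta(\mathbf{1})$ is admissible of finite length by Kudla's theorem, passing to smooth (i.e.\ $K$-finite) parts of both sides yields an isomorphism of the contragredients, and a second application of the contragredient functor reduces the theorem to proving
\[
\CS_1(\CO_1)\cong I(\abs{\,\cdot\,}^{1/2},\abs{\,\cdot\,}^{-1/2}),
\]
whose length-two structure with Steinberg subrepresentation and trivial quotient is precisely case (ii) of Lemma \ref{lem-criterion-irr-Ind}.

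To establish this identification, I would exploit the $G\times G'$-equivariant fibration $\CO_1\twoheadrightarrow \BP^{1}\cong G'/B'$ sending a rank one matrix to its row space, where $B'=\left\{\begin{pmatrix} a & *\\ 0 & d\end{pmatrix}\right\}$ is the upper-triangular Borel of $G'$ stabilizing $[e_{2}]$. The fiber is identified with $F^{2}\setminus\{0\}$, on which $G$ acts by left matrix multiplication and $B'$ acts through the scalar $v\mapsto v/d$, giving $\CS(\CO_1)\cong c\text{-}\Ind_{B'}^{G'}\CS(F^{2}\setminus\{0\})$ as $G\times G'$-modules. Because $G$ acts only through the inducing data and $B'\backslash G'$ is compact,
\[
\CS_1(\CO_1)\cong \Ind_{B'}^{G'}\!\Bigl[\bigl(\CS(F^{2}\setminus\{0\})\otimes\abs{\det}_{G}^{-1}\bigr)_{G}\otimes\abs{\det}_{G'}\big|_{B'}\Bigr].
\]
The inner fiber coinvariant is one-dimensional by the modular character match $\abs{\det}|_{G_{e_{1}}}=|y|=\Delta_{G/G_{e_{1}}}$ already used in the proof of Lemma \ref{rank1-nonzero}; since the $B'$-action on the fiber coincides with the $G$-action by the scalar matrix $d^{-1}\cdot I$, it acts on this line through $\abs{\det(d^{-1}I)}=|d|^{-2}$, and together with the outer twist $\abs{\det}_{G'}|_{B'}=|ad|$ this gives the inducing character $\psi(a,d)=|a/d|$, which is exactly the character defining the normalized $I(\abs{\,\cdot\,}^{1/2},\abs{\,\cdot\,}^{-1/2})$.

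The main technical obstacle will be the precise character bookkeeping in this middle step --- the $\abs{\det}^{\pm1}$ twists built into the normalization of the Weil representation, the modular character of $B'$, and the action of scalars on the one-dimensional fiber coinvariant --- since an exponent error would instead give $I(\abs{\,\cdot\,}^{-1/2},\abs{\,\cdot\,}^{1/2})$, whose subrepresentation (trivial) and quotient (Steinberg) are the reverse of what the theorem demands.
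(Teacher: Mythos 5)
Your proposal is correct and takes essentially the same route as the paper: reduce via Proposition \ref{prp-S(M2)-iso-S(O1)} and the duality \eqref{eqn-dual} to computing $\CS_1(\CO_1)$, identify this coinvariant with $I(\abs{\,\cdot\,}^{1/2},\abs{\,\cdot\,}^{-1/2})$, and conclude by Lemma \ref{lem-criterion-irr-Ind}(ii). Your fibration of $\CO_1$ over $G'/B'$ with a one-dimensional fiber coinvariant is only a repackaging of the paper's parametrization of the rank-one orbit as a quotient of $G\times H$ followed by the explicit $F^\times$-integral, and your character bookkeeping ($\abs{d}^{-2}$ from the fiber coinvariant, $\abs{ad}$ from the $\abs{\det}$-twist, giving $\abs{a/d}$) does land on the correct principal series, in agreement with the paper.
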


\begin{proof}
Let $\pi=\chi=1$. By the duality \eqref{eqn-dual} and Proposition \ref{prp-S(M2)-iso-S(O1)}, one has that
\be \label{eqa-S(M2)-iso-S(O1)}
\Theta(\pi)\cong\cS_\chi(M_2)\cong\cS_\chi(\CO_1).
\ee

Note that
\[
\CO_1=G\begin{pmatrix} 0 & 0\\ 0 &1 \end{pmatrix} H \simeq ({F^\times(P_0\times Q_0)})\backslash({G\times H}),
\]
where $P_0$ (resp. $Q_0$) is the subgroup of $G$ (resp. $H$) of the form $\begin{pmatrix}* & 0 \\ * & 1\end{pmatrix}$ (resp. $\begin{pmatrix}* & * \\ 0 & 1\end{pmatrix}$), and $F^\times$ is view as a subgroup of $G\times H$ via the
embedding $a\mapsto \left(\begin{pmatrix}a & 0 \\ 0 & a\end{pmatrix},\begin{pmatrix}a^{-1} & 0 \\ 0 & a^{-1}\end{pmatrix}\right)$.

Define an $H$-intertwining map $\tau$ by
\[
\begin{array}{rcl}
\tau: \cS\left(({F^\times(P_0\times Q_0)})\backslash({G\times H})\right)\otimes\left(\abs\det^{-1}\boxtimes\abs\det\right) &\to& \cS(Q_0\backslash H) \otimes \abs \det, \\
\phi \otimes 1 \otimes 1 &\mapsto & \phi^\prime\otimes 1,
\end{array}
\]
where $\phi^\prime( Q_0g ):= \phi (1F^\times P_0, F^\times Q_0 g))$, and $H$ acts on $\cS\left(({F^\times(P_0\times Q_0)})\backslash({G\times H})\right)$ and $\cS(Q_0\backslash H)$ by right translation. Denote by $\cS(Q_0\backslash H)_{F^\times, \abs{\,\cdot\,}^{2}}$ the maximal quotient of $\cS(Q_0\backslash H)$ on which the subgroup $F^\times$ acts
through the character $\abs{\,\cdot\,}^{2}$. Let $\lambda$ be the natural map
\[
\lambda:\cS(Q_0\backslash H)\otimes \abs \det \to \cS(Q_0\backslash H)_{F^\times, \abs{\,\cdot\,}^{2}}\otimes \abs \det.
\]
Then the composition $\lambda \circ \tau$ induces an isomorphism

\be
\begin{split}
& \left(\cS\left(({F^\times(P_0\times Q_0)})\backslash({G\times H})\right)\otimes\left(\abs\det^{-1}\boxtimes\abs\det\right)\right)_{G} \\
\cong{}& \cS(Q_0\backslash H)_{F^\times, \abs{\,\cdot\,}^{2}}\otimes \abs \det \label{iso-O1-deg.prin-1}
\end{split}
\ee
of representations of $H$.

Moreover, it can be check that the linear map
\[
\begin{array}{rcl}
\cS(Q_0\backslash H) &\to& C^\infty(\GL_2(F)), \\
\phi & \mapsto & \left(g \mapsto  \int_{F^\times}\phi(Q_0 ga){\abs a}^2 \,\od^\times\! a\right)
\end{array}
\]
induces an $H$-intertwining isomorphism
\be
\cS(Q_0\backslash H)_{F^\times, \abs{\,\cdot\,}^{2}}\cong I\left(\abs{\,\cdot\,}^{-1/ 2}, \abs{\,\cdot\,}^{- 3/ 2}\right) \cong I\left(\abs{\,\cdot\,}^{ 1/ 2}, \abs{\,\cdot\,}^{- 1/ 2}\right)\otimes {\abs \det}^-1.
\ee
Equivalently,
\be \label{eqa-O1-iso-deg.prin-2}
\cS(Q_0\backslash H)_{F^\times, \abs{\,\cdot\,}^{2}}\otimes \abs \det \cong I\left(\abs{\,\cdot\,}^{ 1/ 2}, \abs{\,\cdot\,}^{- 1/ 2}\right).
\ee

By Lemma \ref{lem-criterion-irr-Ind}, the length of $I\left(\abs{\,\cdot\,}^{ 1/ 2}, \abs{\,\cdot\,}^{- 1 / 2}\right)$ is $2$, and it admits a unique irreducible subrepresentation, namely the Steinberg representation. The corresponding quotient is the trivial representation. Therefore, by the isomorphisms \eqref{eqa-S(M2)-iso-S(O1)}, \eqref{iso-O1-deg.prin-1} and \eqref{eqa-O1-iso-deg.prin-2}, the theorem is proved.
\end{proof}

\begin{rmk}
In fact, Theorem \ref{trivial-rep} gives a negative answer to Question 6.3 posed in \cite{APS}.
Let $\pi$ be an irreducible representation of $\GL_2(F)$. Recall the Euler-Poincar{\'e} characteristic \cite[Equation 3.4]{APS}
\[
\mathrm{EP}_{\GL_2(F)}(\omega, \pi):= \sum_i (-1)^i \mathrm{Ext}_{\GL_2(F)}^i(\omega, \pi),
\]
which is a well-defined element of the Grothendieck group of $\GL_2(F)$-modules.

By \cite[Theorem 5.13]{APS}, one has that
\be \label{eq-EP}
\mathrm{EP}_{\GL_2(F)}(\omega, \pi)_\infty = \pi,
\ee
where the subscript ``$\infty$'' stands for taking smooth vectors. Assume that $\omega$ is a projective $\GL_2(F)$-module, as is expected in \cite[Question 6.3]{APS}, then \eqref{eq-EP} implies that
\[
\Hom_{\GL_2(F)}(\omega, \pi)_\infty = \pi.
\]
On the other hand, Proposition 4.1 of \cite{APS} shows that
\[
\Hom_{\GL_2(F)}(\omega, \pi)_\infty = \Theta(\pi)^\vee.
\]
It follows that
\[
\Theta(\pi)^\vee= \pi,
\]
which contradicts to Theorem \ref{trivial-rep}.

\end{rmk}

\section{The almost equal rank case} \label{sec-almost-eq-rank}

In this section we study the full theta lifts of discrete series and tempered representations for dual pair $(\GL_n(F), \GL_{n+1}(F))$, where $F$ is a non-archimedean field with characteristic $0$.  The idea is due to Mui\'c \cite{Mu}, who has achieved fruitful results on the full theta lifting for type I dual pairs. See also \cite{GSa} for an exposition on the almost equal rank type I dual pairs.

\subsection{Kudla's filtration and Casselman's square-integrability criterion} \label{sec-Kud-and-Cas}

For a reductive group $G$, denote by $\mathrm{Rep}(G)$ the category of smooth representations of $G$. Let $P=MN$ be a parabolic subgroup of $G$, where $M$ and $N$ are the Levi and unipotent part of $P$ respectively. We have the normalized (smooth) induction functor
\[
\Ind_P^G: \mathrm{Rep}(M) \to \mathrm{Rep}(G).
\]
Moreover, we have the normalized Jacquet functor
\[
R_P: \mathrm{Rep}(G) \to \mathrm{Rep}(M).
\]
Let $\ov{P}=M\ov{N}$ be the opposite parabolic subgroup to $P$. We get a Jacquet functor $R_{\ov{P}}$ similarly. Then for $\pi\in \mathrm{Rep}(G)$, $\sigma\in\mathrm{Rep}(M)$, one has the standard Frobenius reciprocity
\be \label{eq-Frob}
\Hom_G(\pi, \Ind_P^G(\sigma)) \cong \Hom_M(R_P(\pi), \sigma),
\ee
and Bernstein's Frobenius reciprocity (see \cite[Theorem 20]{Ber})
\be \label{eq-Bern-Frob}
\Hom_G(\Ind_P^G(\sigma), \pi) \cong \Hom_M(\sigma, R_{\ov{P}}(\pi)).
\ee

When we apply the Jacquet functor to the Weil representation of $\GL_n(F)\times \GL_{n+1}(F)$, we should look into the Weil representation of $\GL_l(F)\times \GL_m(F)$ for various $l$ and $m$. To avoid confusion, we write $G_l$ for the first member of dual pair $(\GL_l(F), \GL_m(F))$, and write $G^\prime_m$ for the second member. Also, $P^\prime$, $M^\prime$ (and so on) stand for subgroups of  $G^\prime_m$.
Denote by $\omega_{l, m}$ the Weil representation of $G_l\times G^\prime_m$. Let $\pi$ be an irreducible representation of $G_l$. Denote by $\Theta_{l,m}(\pi)$ the full theta lift of $\pi$ to $G^\prime_m$. The unique irreducible quotient of $\Theta_{l,m}(\pi)$, i.e. the (small) theta lift of $\pi$, is denoted by $\theta_{l,m}(\pi)$.

\begin{lemma}(\cite[Lemma 2.4]{APS}) \label{lem-almost-lem1}
Let $\pi \in \mathrm{Irr}(G_n)$. We have the following isomorphism
\[
\Hom_{G_n}(\cS(G_n), \pi)_\infty \cong \pi
\]
as representation of $G^\prime_n$.
\end{lemma}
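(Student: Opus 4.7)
The plan is to construct an explicit $G^\prime_n$-equivariant isomorphism from $\pi$ onto $\Hom_{G_n}(\cS(G_n), \pi)_\infty$ by convolution, and then verify bijectivity one compact open subgroup of $G^\prime_n$ at a time via Frobenius reciprocity.

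First I would define
\[
T : \pi \longrightarrow \Hom_{G_n}(\cS(G_n), \pi), \qquad T(v)(f) := \int_{G_n} f(g)\, \pi(g) v \, dg,
\]
which makes sense because $f$ has compact support and $\pi$ is smooth, so the integrand takes only finitely many values. Routine changes of variable, using unimodularity of $\GL_n(F)$, show that $T(v)$ is equivariant for the left-translation action of $G_n$ on $\cS(G_n)$, and that $T$ itself intertwines the natural action of $G^\prime_n$ on $\pi$ with the action on the Hom space induced by right translation on $\cS(G_n)$. For any compact open subgroup $K \subset G_n$ and any $v \in \pi^K$, setting $e_K := \vol(K)^{-1}\mathbf{1}_K$ one immediately sees $T(v)(e_K) = v$, which proves injectivity of $T$; a short computation also shows $T(v)$ is right $K$-invariant, so $T$ lands in the smooth part $\Hom_{G_n}(\cS(G_n), \pi)_\infty = \bigcup_K \Hom_{G_n}(\cS(G_n), \pi)^K$.

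For surjectivity it suffices to show, for each compact open $K \subset G^\prime_n$, that $T$ restricts to an isomorphism $\pi^K \to \Hom_{G_n}(\cS(G_n), \pi)^K$. Any $\phi$ on the right-hand side factors through the right $K$-coinvariants of $\cS(G_n)$, which, $K$ being compact open, are identified by averaging with the right $K$-invariants $\cS(G_n/K)$; as a left $G_n$-module this is the compactly induced representation $\mathrm{ind}_K^{G_n}\C$. Classical Frobenius reciprocity for induction from a compact open subgroup then gives
\[
\Hom_{G_n}\bigl(\mathrm{ind}_K^{G_n}\C,\, \pi\bigr) \cong \Hom_K(\C, \pi) = \pi^K,
\]
and a direct unwinding shows this chain of identifications inverts the restriction of $T$. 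Passing to the union over all $K$ completes the proof.

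No separate verification of $G^\prime_n$-equivariance of the final isomorphism is needed, since it is already packaged into the equivariance of $T$ established at the outset. The only mild point requiring care is the averaging identification between right $K$-coinvariants and right $K$-invariants of a smooth representation with respect to a compact open subgroup, which is a standard fact in the non-archimedean setting; every other ingredient is routine Frobenius reciprocity.
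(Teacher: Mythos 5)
Your proof is correct, and since the paper does not prove this lemma itself but simply cites \cite[Lemma 2.4]{APS}, there is no in-paper argument to diverge from: your convolution map $T(v)(f)=\int_{G_n} f(g)\pi(g)v\,dg$ together with the identification $\cS(G_n)^K\cong\mathrm{ind}_K^{G_n}\C$ and Frobenius reciprocity for compact open subgroups is exactly the standard route (essentially that of the cited reference). The only cosmetic point is that the Frobenius chain inverts $T$ only up to the factor $\vol(K)$, which of course does not affect bijectivity or the $G^\prime_n$-equivariance you established at the outset.
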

%\begin{proof}
%This statement is just a dual version of Lemma \ref{frob}.
%\end{proof}

\begin{lemma}(\cite[Proposition 4.1]{APS}) \label{lem-almost-lem2}
Let $\pi \in \mathrm{Irr}(G_n)$. We have the following isomorphism
\[
\Hom_{G_n}(\omega_{n, m}, \pi)_\infty \cong \Theta_{n,m}(\pi)^\vee
\]
as representations of $G^\prime_m$.
\end{lemma}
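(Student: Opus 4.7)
The strategy is the standard adjunction between Hom and coinvariants in the full linear dual, followed by passage to smooth vectors under $G^\prime_m$. By construction $\Theta_{n,m}(\pi) = (\omega_{n,m} \otimes \pi^\vee)_{G_n}$, and the $G^\prime_m$-action on $\omega_{n,m}$ commutes with the $G_n$-action and descends naturally to both sides of the claimed isomorphism, so everything should take place in the category of $G^\prime_m$-modules.

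First I would establish the algebraic identity
\[
\Hom_{G_n}(\omega_{n,m}, \pi) \;\cong\; \left((\omega_{n,m} \otimes \pi^\vee)_{G_n}\right)^{\!*}
\]
of $G^\prime_m$-modules, where $(\,\cdot\,)^*$ denotes the full algebraic linear dual. In one direction, a $G_n$-equivariant map $T \colon \omega_{n,m} \to \pi$ gives the $G_n$-invariant functional $v \otimes \xi \mapsto \langle T(v), \xi \rangle$, which factors through the $G_n$-coinvariants. Conversely, a $G_n$-invariant functional $\ell$ on $\omega_{n,m} \otimes \pi^\vee$ defines a $G_n$-equivariant map $v \mapsto \ell(v \otimes \,\cdot\,)$ from $\omega_{n,m}$ to $(\pi^\vee)^*$; because $\omega_{n,m}$ consists of smooth vectors, the image lies in the smooth part $((\pi^\vee)^*)_\infty$, which coincides with $\pi$ by admissibility (the standard identification $(\pi^\vee)^\vee = \pi$). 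A routine check shows these two constructions are mutually inverse and intertwine the $G^\prime_m$-actions.

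Next I would take smooth vectors under $G^\prime_m$ on both sides. The left-hand side yields $\Hom_{G_n}(\omega_{n,m}, \pi)_\infty$ tautologically. For the right-hand side, the smooth vectors in the full algebraic dual are, by definition, the smooth contragredient, so
\[
\left(\left((\omega_{n,m} \otimes \pi^\vee)_{G_n}\right)^{\!*}\right)_{\!\infty}
\;=\; \left((\omega_{n,m} \otimes \pi^\vee)_{G_n}\right)^{\!\vee}
\;=\; \Theta_{n,m}(\pi)^\vee.
\]
Combining the two identities gives the claimed isomorphism as $G^\prime_m$-modules.

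The only substantive ingredient is the double-duality step $((\pi^\vee)^*)_\infty = \pi$, which is exactly where admissibility of $\pi$ enters and hence where the hypothesis $\pi \in \mathrm{Irr}(G_n)$ is used. Beyond that, the proof is a formal unwinding of adjunctions; the main bookkeeping task is to verify carefully that the $G^\prime_m$-action is respected at each identification, but I do not foresee a genuine obstacle here.
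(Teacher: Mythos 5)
Your proof is correct and is essentially the standard argument: the adjunction $\Hom_{G_n}(\omega_{n,m},\pi)\cong\bigl((\omega_{n,m}\otimes\pi^\vee)_{G_n}\bigr)^{*}$ (valid because $\omega_{n,m}$ is smooth and $\pi$ is admissible, so maps into $(\pi^\vee)^{*}$ land in $(\pi^\vee)^\vee\cong\pi$), followed by taking $G^\prime_m$-smooth vectors and using the paper's definition $\Theta_{n,m}(\pi)=(\omega_{n,m}\otimes\pi^\vee)_{G_n}$. The paper itself gives no proof, citing \cite[Proposition 4.1]{APS}, whose proof proceeds by the same formal duality, so your route coincides with the intended one.
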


Let $n=n_1+n_2+\cdots+n_k$ be a partition of $n$. We write $P_{n_1, n_2, \cdots, n_k}$ for the block-wise upper triangular parabolic subgroup of $G_n$ which has $M_{n_1, n_2, \cdots, n_k}:=G_{n_1}\times G_{n_2}\times\cdots\times G_{n_k}$ as a Levi factor.

The following lemma, known as the Kudla's filtration (for type II dual pairs), is due to M\'inguez \cite[Proposition 3.3]{Mi}.
\begin{lemma} \label{lem-kudla-filtration}
The normalized Jacquet module $R_{P^\prime_{t,n+1-t}}(\omega_{n, n+1})$ of $\omega_{n, n+1}$ has a filtration of $G_n\times M^\prime_{t,n+1-t}$ modules
\[
0=R^{t+1}\subset R^t \subset \cdots\subset R^1 \subset R^0=R_{P^\prime_{t,n+1-t}}(\omega_{n, n+1})
\]
with successive quotients
\[
\begin{split}
J^i:&= R^i/R^{i+1} \\
& \cong \Ind_{P_{i,n-i}\times P^\prime_{t-i,i}\times G^\prime_{n+1-t}}^{G_n\times M^\prime_{t,n+1-t}}(\chi_{t,i} \otimes \cS(G_i)\otimes \omega_{n-i, n+1-t}).
\end{split}
\]
Here, $\chi_{t,i}$ is a character given by
\[
\chi_{t,i}=
\left\{ \begin{array}{ll}
\abs\det ^{\frac{-1+2t-i}{2}} \quad &\textrm{ on } G_i \\
\abs\det ^{\frac{t-i}{2}} \quad &\textrm{ on } G_{n-i} \\
\abs\det ^{\frac{1-i+t}{2}} \quad  &\textrm{ on } G^\prime_{t-i}\\
\abs\det ^{\frac{1-2t+i}{2}} \quad &\textrm{ on } G^\prime_{i} \\
\abs\det ^{\frac{-t+i}{2}}  \quad &\textrm{ on } G^\prime_{n+1-t}.
  \end{array}
  \right.
\]
While $G_i$ and $G_i^\prime$ act on $\cS(G_i)$ via left and right translation. If $i>t$, then $J^i$ is interpreted to be $0$.
\end{lemma}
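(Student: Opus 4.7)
The plan is to realize $\omega_{n,n+1}$ as $\CS(M_{n\times(n+1)}(F)) \otimes (\abs\det^{-\frac{n+1}{2}} \boxtimes \abs\det^{\frac{n}{2}})$ and stratify the underlying matrix space according to the action of $P^\prime_{t,n+1-t}$. Split each matrix as $M = (A \mid B)$ with $A \in M_{n\times t}(F)$ and $B \in M_{n\times(n+1-t)}(F)$. The Levi $M^\prime_{t,n+1-t} = G^\prime_t \times G^\prime_{n+1-t}$ then acts by $(A,B) \mapsto (Ag_1^\prime, Bg_2^\prime)$, while the unipotent radical $N^\prime$, parameterized by $X \in M_{t\times(n+1-t)}(F)$, sends $(A,B)$ to $(A, B+AX)$. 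In particular $\rank A$ is $N^\prime$-invariant, so the locally closed strata $X_i := \{(A,B) : \rank A = i\}$ for $0 \leq i \leq t$ are all $(G_n \times P^\prime_{t,n+1-t})$-stable, and the open subsets $Y_i := \{(A,B) : \rank A \geq i\}$ give Bernstein--Zelevinsky short exact sequences
\[
0 \to \CS(Y_{i+1}) \to \CS(Y_i) \to \CS(X_i) \to 0.
\]
Defining $R^i$ to be the image of $R_{P^\prime_{t,n+1-t}}(\CS(Y_i) \otimes \text{twist})$ in $R_{P^\prime_{t,n+1-t}}(\omega_{n,n+1})$, exactness of the normalized Jacquet functor produces the claimed filtration $R^0 \supset R^1 \supset \cdots \supset R^{t+1} = 0$; since $\rank A \leq t$ on $M_{n\times t}(F)$, one has $Y_i = \varnothing$ and thus $J^i = 0$ for $i > t$.

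Next I would identify each $J^i \cong R_{P^\prime_{t,n+1-t}}(\CS(X_i)\otimes\text{twist})$ as an induced representation. On the $B$-factor, $N^\prime$ translates each of the $n+1-t$ columns by vectors in $\Im A \subset F^n$, so the $N^\prime$-coinvariants compute, via integration along fibers, Schwartz functions on $(F^n/\Im A)^{n+1-t}$. Choosing the base point $A_0$ (the $n\times t$ matrix with $I_i$ in the top-left corner and zeros elsewhere) identifies $F^n/\Im A_0$ with $F^{n-i}$, and the resulting $G_{n-i}\times G^\prime_{n+1-t}$-module becomes, after absorbing the defining twist, exactly $\omega_{n-i,n+1-t}$. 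On the $A$-factor, $G_n \times G^\prime_t$ acts transitively on $M^{(i)}_{n\times t}(F)$; a direct stabilizer computation shows that the isotropy of $A_0$ lies inside $P_{i,n-i} \times P^\prime_{t-i,i}$, cut out by the condition that the two $G_i$-Levi factors be mutually inverse. Induction in stages then identifies $\CS(M^{(i)}_{n\times t}(F))$ (up to scalar twist) with $\Ind_{P_{i,n-i} \times P^\prime_{t-i,i}}^{G_n \times G^\prime_t} \cS(G_i)$, where $\cS(G_i)$ carries the left-right $G_i \times G_i$-action. Combining these two identifications produces the induced-representation shape of $J^i$ claimed in the lemma.

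The remaining, and principal, obstacle is the careful bookkeeping of scalar twists needed to pin down $\chi_{t,i}$ on the nose. Four contributions have to be combined: (a) the defining twist $\abs\det^{-\frac{n+1}{2}} \boxtimes \abs\det^{\frac{n}{2}}$ of $\omega_{n,n+1}$; (b) the inverse of the analogous defining twist of the target $\omega_{n-i,n+1-t}$; (c) the normalization factor $\delta_{P^\prime_{t,n+1-t}}^{-1/2}$ of the Jacquet functor, together with the Jacobian arising from integration along the fibers of $M_{n\times(n+1-t)}(F) \to M_{(n-i)\times(n+1-t)}(F)$; and (d) the modular characters $\delta_{P_{i,n-i}}^{1/2}$ and $\delta_{P^\prime_{t-i,i}}^{1/2}$ from the induction-in-stages identification on the $A$-factor. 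Each of these is a power of $\abs\det$ on one of $G_i$, $G_{n-i}$, $G^\prime_{t-i}$, $G^\prime_i$, $G^\prime_{n+1-t}$; summing the exponents on each factor returns exactly the five cases in the displayed formula for $\chi_{t,i}$. This calculation is the one performed in M\'inguez \cite{Mi}, and I would follow that argument.
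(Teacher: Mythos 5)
The paper offers no proof of this lemma: it is quoted directly from M\'inguez \cite[Proposition 3.3]{Mi}, so the paper's ``argument'' is the citation itself. Your stratification of $M_{n\times(n+1)}(F)=\{(A\mid B)\}$ by $\rank A$, the Bernstein--Zelevinsky exact sequences, the transitivity/stabilizer analysis on the rank-$i$ stratum, and the identification of the $N^\prime$-coinvariants with $\omega_{n-i,n+1-t}$ is exactly the standard argument carried out in that reference, and your outline of it is correct; the one piece you do not carry out yourself --- the bookkeeping of modular characters and Jacobians that pins down $\chi_{t,i}$ --- is precisely what you (like the paper) delegate to \cite{Mi}, which is consistent with how the result is used here.
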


In particular, the bottom piece
\[
J^t=R^t\cong \Ind_{P_{t,n-t}\times G^\prime_{t}\times G^\prime_{n+1-t}}^{G_n\times M^\prime_{t,n+1-t}}(\chi_{t,t} \otimes \cS(G_t)\otimes \omega_{n-t, n+1-t}),
\]
where
\[
\chi_{t,t}=
\left\{ \begin{array}{ll}
\abs\det ^{\frac{-1+t}{2}} \quad &\textrm{ on } G_t \\
1 \quad &\textrm{ on } G_{n-t} \\
\abs\det ^{\frac{1-t}{2}} \quad &\textrm{ on } G^\prime_{t} \\
1  \quad &\textrm{ on } G^\prime_{n+1-t},
  \end{array}
  \right.
\]
is a subrepresentation of $R_{P^\prime_{t,n+1-t}}(\omega_{n, n+1})$. This subrepresentation plays an important role in our induction procedure.

The following lemma is a direct consequence of Casselman square-integrability criterion (see \cite[Theorem 6.5.1]{Ca2}). We shall frequently exploit it.
\begin{lemma} \label{lem-casselman}
Let $\pi$ be a discrete series representation (resp. tempered representation) of $G_n$. Then on each irreducible constituents of $R_{P_{t,n-t}}(\pi)$, the center of $G_t$ acts by a character of the form $\chi\abs{\,\cdot\,}^\alpha$, where $\chi$ is unitary and $\alpha>0$ (resp. $\alpha\geq0$).
\end{lemma}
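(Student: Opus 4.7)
Since the lemma is advertised as a direct consequence of Casselman's criterion, my plan is a straightforward unwinding. First, I would identify the character under study. Any irreducible constituent of $R_{P_{t,n-t}}(\pi)$ factors as $\sigma_1 \boxtimes \sigma_2$ with $\sigma_1 \in \mathrm{Irr}(G_t)$ and $\sigma_2 \in \mathrm{Irr}(G_{n-t})$, and the center $Z(G_t) = \{aI_t : a \in F^\times\}$ acts through the central character $\omega_1$ of $\sigma_1$. Since every smooth character of $F^\times$ may uniquely be written as $\chi|\cdot|^\alpha$ with $\chi$ unitary and $\alpha \in \R$, the task reduces to determining the sign of $\alpha$.

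Next, I would exploit the unitarity of the central character of $\pi$. Because $\pi$ is square-integrable modulo center, its central character $\omega_\pi$ is unitary; moreover, since $\delta_{P_{t,n-t}}$ is trivial on the scalar matrices, the normalized Jacquet functor preserves the action of $Z(G_n)$. Writing the central character of $\sigma_2$ as $\chi_2|\cdot|^{\alpha_2}$, evaluation at $aI_n = \diag(aI_t, aI_{n-t})$ yields
\[
\omega_{\sigma_1 \boxtimes \sigma_2}(aI_n) = \chi(a)\chi_2(a)|a|^{\alpha+\alpha_2},
\]
and unitarity forces $\alpha_2 = -\alpha$.

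Finally, I would invoke Casselman's square-integrability criterion (\cite[Theorem 6.5.1]{Ca2}): an exponent of $R_{P_{t,n-t}}(\pi)$, viewed as a character of the split component $A_{M_{t,n-t}}/A_{G_n}$, has absolute value strictly less than $1$ on the open positive Weyl chamber $A_{M_{t,n-t}}^+$ when $\pi$ is discrete series (respectively, $\le 1$ when $\pi$ is merely tempered). A direct computation shows that conjugation by $\diag(a_1 I_t, a_2 I_{n-t})$ scales the unipotent radical of $P_{t,n-t}$ by $a_1/a_2$, so $A_{M_{t,n-t}}^+ = \{|a_1| < |a_2|\}$. Substituting $\alpha_2 = -\alpha$, the modulus of the exponent equals $(|a_1|/|a_2|)^\alpha$, and the inequality $(|a_1|/|a_2|)^\alpha < 1$ for all $|a_1| < |a_2|$ is equivalent to $\alpha > 0$; the tempered case is identical with the weak inequality. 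The only point that needs care is the bookkeeping of normalization factors for the Jacquet functor; beyond that there is no real obstacle.
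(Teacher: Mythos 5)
Your proposal is correct and follows exactly the route the paper intends: the paper gives no written proof, merely citing Casselman's criterion (\cite[Theorem 6.5.1]{Ca2}), and your unwinding --- unitarity of the central character of a discrete series/tempered $\pi$, preservation of the central character under the normalized Jacquet functor since $\delta_{P_{t,n-t}}$ is trivial on scalars, and the reduction for the maximal parabolic to the single inequality $(\abs{a_1}/\abs{a_2})^{\alpha}<1$ (resp.\ $\leq 1$) on the cone $\abs{a_1}<\abs{a_2}$ --- is the standard deduction. The only quibble is terminological: the cone $\{\abs{a_1}<\abs{a_2}\}$ on which the criterion is tested is usually called the negative (contracting) chamber for the standard upper-triangular parabolic rather than the positive one, but your inequality and the resulting sign of $\alpha$ are the correct ones.
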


\subsection{The discrete series representations} \label{sec-alm-disc}

\begin{prop} \label{prop-discr-temp}
Let $\pi$ be an irreducible discrete series representation of $G_n$. Then every irreducible subquotient of $\Theta_{n,n+1}(\pi)$ is a tempered representation of  $G^\prime_{n+1}$.
\end{prop}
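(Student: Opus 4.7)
The plan is to verify the tempered form of Casselman's criterion (Lemma \ref{lem-casselman}) for every irreducible subquotient $\sigma$ of $\Theta_{n,n+1}(\pi)$. Concretely, for each $1 \leq t \leq n$, I must show that $Z(G'_t)$ acts on every irreducible constituent of $R_{P'_{t, n+1-t}}(\sigma)$ by a character $\chi|\cdot|^\alpha$ with $\chi$ unitary and $\alpha \geq 0$, and separately that $\sigma$ has unitary central character. By exactness of the Jacquet functor, together with the fact that $R_{P'_{t,n+1-t}}$ commutes with both $G_n$-coinvariants and tensoring by $\pi^\vee$, these constituents occur as irreducible subquotients of
\[
R_{P'_{t, n+1-t}}(\Theta_{n, n+1}(\pi)) \;\cong\; \bigl(R_{P'_{t, n+1-t}}(\omega_{n, n+1}) \otimes \pi^\vee\bigr)_{G_n}.
\]

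I then apply Kudla's filtration (Lemma \ref{lem-kudla-filtration}): by right-exactness of $(-)_{G_n}$ and of $(-) \otimes \pi^\vee$, every such irreducible subquotient occurs inside $(J^i \otimes \pi^\vee)_{G_n}$ for some $0 \leq i \leq t$. Dualising \eqref{eq-Bern-Frob} and combining with Casselman's duality $R_{\bar P}(\pi)^\vee \cong R_P(\pi^\vee)$ produces
\[
\bigl(\Ind_{P_{i, n-i}}^{G_n}(V) \otimes \pi^\vee\bigr)_{G_n} \cong \bigl(V \otimes R_{P_{i, n-i}}(\pi^\vee)\bigr)_{M_{i, n-i}}.
\]
Writing an irreducible constituent of $R_{P_{i, n-i}}(\pi^\vee)$ as $\eta_1 \boxtimes \eta_2$, the decisive point is that the two twists $\chi_{t, i}|_{G_i} = |\det|^{(2t-i-1)/2}$ and $\chi_{t, i}|_{G'_i} = |\det|^{(i-2t+1)/2}$ built into Kudla's character are mutually inverse on $\GL_i(F)$. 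An averaging map $f \otimes w \mapsto \int_{G_i} f(x)\, \chi_{t, i}|_{G_i}(x)^{-1}\, (x^{-1} \cdot w)\, dx$ then identifies
\[
\bigl(\chi_{t, i}|_{G_i} \otimes \cS(G_i) \otimes \chi_{t, i}|_{G'_i} \otimes \eta_1\bigr)_{G_i} \;\cong\; \eta_1
\]
as $G'_i$-representations, via the tautological identification $G'_i \cong G_i$.

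Tracking the $Z(G'_t)$-action through the normalised induction $\Ind_{P'_{t-i, i}}^{G'_t}$, and using that $\delta_{P'_{t-i, i}}^{1/2}$ is trivial on the scalar diagonal $(zI_{t-i}, zI_i)$, the central exponent on any irreducible subquotient of $(J^i \otimes \pi^\vee)_{G_n}$ reduces to
\[
\alpha \;=\; \frac{(t-i)(t-i+1)}{2} \;+\; \alpha_0'',
\]
where $\alpha_0''$ is the $Z(G_i)$-exponent of $\eta_1$. Discrete-series Casselman applied to $\pi^\vee$ yields $\alpha_0'' > 0$ for $1 \leq i \leq n-1$; at the extremes $i = 0$ (where $\cS(G_0) = \C$ contributes nothing) and $i = n$ (where $\eta_1 = \pi^\vee$ has unitary central character) one has $\alpha_0'' = 0$, and the quadratic summand carries the argument. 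Because $i \leq t$ forces $(t-i)(t-i+1)/2 \geq 0$, I obtain $\alpha \geq 0$ uniformly, with equality only in the boundary case $i = t = n$. Unitarity of the central character of $\sigma$ is a direct check on $\omega_{n, n+1}$: the determinantal twists $|\det|_n^{-(n+1)/2}$ and $|\det|_{n+1}^{n/2}$ cancel the scaling action of the diagonal $(zI_n, zI_{n+1})$ on $\cS(M_{n \times (n+1)}(F))$, so this diagonal is trivial on $\omega_{n, n+1}$; consequently the central character of $\Theta_{n, n+1}(\pi)$ is forced to be the inverse of that of $\pi$, which is unitary by hypothesis.

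The main technical obstacle I anticipate is the precise coinvariant identification in the middle step: the twists $\chi_{t, i}|_{G_i}$, $\chi_{t, i}|_{G'_i}$ and the modulus $\delta_{P'_{t-i, i}}^{1/2}$ must conspire so that the $G_i$-coinvariants of $\cS(G_i) \otimes \eta_1$ (suitably twisted) return $\eta_1$ with the expected $G'_i$-action. Any sign error in the cancellation $\chi_{t,i}|_{G_i}\cdot \chi_{t,i}|_{G'_i} = 1$, or in the induction modulus, would push $\alpha$ into the strictly negative range precisely in the critical case $i = t$ and invalidate temperedness.
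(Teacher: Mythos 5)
Your proposal is correct in substance, but it takes a genuinely different route from the paper. The paper argues by contradiction in the Hom picture: it picks the minimal index $k$ for which a subquotient $\sigma_k$ of $\Theta_{n,n+1}(\pi)$ is non-tempered, embeds $\sigma_k$ into $\Ind_{P'_{t,n+1-t}}^{G'_{n+1}}(\tau\abs{\det}^{-s_0}\boxtimes\rho_0)$ with $s_0>0$, uses a central-character splitting argument to extract a nonzero map out of $R_{P'_{t,n+1-t}}(\Theta_{n,n+1}(\pi))$, shows via a vanishing argument (Casselman applied to $R_{\ov{P'_{t-i,i}}}(\tau)$) that only the bottom piece $J^t$ of Kudla's filtration can support this map, identifies $\Hom_{M'_{t,n+1-t}}(J^t,\cdot)_\infty$ as an induced representation of $G_n$ using Lemma \ref{lem-almost-lem1}, and finally derives $s_0<0$ from Casselman applied to the discrete series $\pi^\vee$. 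You instead verify the tempered Casselman criterion directly, computing the $Z(G'_t)$-exponents on all graded pieces $(J^i\otimes\pi^\vee)_{G_n}$ in the coinvariants picture; your exponent formula $\alpha=\tfrac{(t-i)(t-i+1)}{2}+\alpha_0''$ is right, and the cancellation you worry about does work out: the integration map $f\otimes v\mapsto\int_{G_i}f(x)\,\chi_{t,i}|_{G_i}(x)^{-1}\,\eta_1(x^{-1})v\,dx$ identifies the $G_i$-coinvariants with $\eta_1$ twisted by $\chi_{t,i}|_{G_i}\cdot\chi_{t,i}|_{G'_i}=1$, so no inversion of the central character occurs and discrete-series positivity of the exponents of $R_{P_{i,n-i}}(\pi^\vee)$ gives $\alpha\geq 0$ uniformly, with the boundary case $i=t=n$ handled by the unitary central character. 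What your route buys is uniformity: no minimal-index bookkeeping, no splitting lemma, no need for the Hom-picture Lemmas \ref{lem-almost-lem1} and \ref{lem-almost-lem2}, and the pieces $J^i$ with $i<t$ enter only through their central characters rather than through a vanishing argument. What it costs is two ingredients beyond the paper's toolkit that you should justify explicitly: (1) the \emph{converse} (sufficiency) direction of the tempered Casselman criterion, since Lemma \ref{lem-casselman} as stated only gives the forward implication; and (2) the isomorphism $\bigl(\Ind_{P_{i,n-i}}^{G_n}(V)\otimes\pi^\vee\bigr)_{G_n}\cong\bigl(V\otimes R_{P_{i,n-i}}(\pi^\vee)\bigr)_{M_{i,n-i}}$, which is better established by the projection formula and integration over $N_{i,n-i}$ than by dualising \eqref{eq-Bern-Frob}, because $V$ here is far from admissible and an isomorphism of duals does not by itself descend; since you only need to track central characters of subquotients, a natural surjection plus d\'evissage through a finite filtration of $R_{P_{i,n-i}}(\pi^\vee)$ suffices and avoids this issue entirely.
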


\begin{proof}
Fix a $G^\prime_{n+1}$-invariant filtration of $\Theta_{n,n+1}(\pi)$:
\[
0=\Sigma_{\ell+1}\subset \Sigma_\ell \subset \cdots \subset \Sigma_1\subset \Sigma_0=\Theta_{n,n+1}(\pi)
\]
such that
\[
\sigma_i:=\Sigma_i/\Sigma_{i+1}
\]
is irreducible for all $i$. We argue by contradiction that all $\sigma_i$ are tempered representations.  Assume that $k$ is the smallest integer such that $\sigma_k$ is a non-tempered representation. Then there is parabolic subgroup $P^\prime_{t, n+1-t}$ of $G^\prime_{n+1}$ such that
\[
\sigma_k \hookrightarrow \Ind_{P^\prime_{t, n+1-t}}^{G^\prime_{n+1}}(\tau\abs\det^{-s_0}\boxtimes \rho_0),
\]
where $\tau$ is an irreducible discrete series representation of $G_t^\prime$, $s_0>0$, and $\rho_0$ is an irreducible representation of $G^\prime_{n+1-t}$. By Frobenius reciprocity \eqref{eq-Frob}, we have
\[
R_{P^\prime_{t, n+1-t}}(\sigma_k) \twoheadrightarrow \tau\abs\det^{-s_0}\boxtimes \rho_0.
\]
By the  exactness of the Jacquet functor, we have
\[
R_{P^\prime_{t, n+1-t}}(\Sigma_{\ell+1})\subset R_{P^\prime_{t, n+1-t}}(\Sigma_\ell) \subset \cdots \subset R_{P^\prime_{t, n+1-t}}( \Sigma_0)
\]
with
\[
R_{P^\prime_{t, n+1-t}}(\sigma_i)=R_{P^\prime_{t, n+1-t}}(\Sigma_i)/R_{P^\prime_{t, n+1-t}}(\Sigma_{i+1}).
\]
Suppose that
\[
\begin{split}
\tau\abs\det^{-s_0}\boxtimes \rho_0&=(R_{P^\prime_{t, n+1-t}}(\Sigma_k)/R_{P^\prime_{t, n+1-t}}(\Sigma_{k+1}))/(T/R_{P^\prime_{t, n+1-t}}(\Sigma_{k+1}))\\
&=R_{P^\prime_{t, n+1-t}}(\Sigma_k)/T,
\end{split}
\]
then we obtain the following exact sequence of  $M^\prime_{t, n+1-t}$-modules:
\be \label{eq-exact-seq-split}
0\to \tau\abs\det^{-s_0}\boxtimes \rho_0 \to R_{P^\prime_{t, n+1-t}}(\Theta_{n,n+1}(\pi))/T\to A\to 0,
\ee
where $A=R_{P^\prime_{t, n+1-t}}(\Theta_{n,n+1}(\pi))/R_{P^\prime_{t, n+1-t}}(\Sigma_k)$ is a finite length representation which has a filtration with successive quotients $R_{P^\prime_{t, n+1-t}}(\sigma_i)$ for $i<k$. A key observation is  that the above exact sequence splits. Indeed, since $\sigma_i$ is a tempered representation for $i<k$, Lemma \ref{lem-casselman} asserts that on each irreducible constituent of $R_{P^\prime_{t, n+1-t}}(\sigma_i)$, the center of $G^\prime_t$ acts by a character of the form $\chi\abs{\,\cdot\,}^\alpha$ with $\chi$  unitary and $\alpha\geq0$. On the other hand,  the center of $G^\prime_t$ acts on $\tau\abs\det^{-s_0}\boxtimes \rho_0$ by $\abs{\,\cdot\,}^{-s_0 t}$ up to a unitary  character. It follows that the sequence \eqref{eq-exact-seq-split} splits.

Thus one has a nonzero $M^\prime_{t, n+1-t}$-equivariant map
\[
R_{P^\prime_{t, n+1-t}}(\Theta_{n,n+1}(\pi)) \to \tau\abs\det^{-s_0}\boxtimes \rho_0.
\]
By Frobenius reciprocity, one has a nonzero $G_n\times G^\prime_{ n+1}$-equivariant map
\[
\omega_{n,n+1} \twoheadrightarrow \pi\boxtimes\Theta_{n,n+1}(\pi) \to \pi \boxtimes \Ind_{P^\prime_{t, n+1-t}}^{G^\prime_{ n+1}}(\tau\abs\det^{-s_0}\boxtimes \rho_0).
\]
Hence we have
\[
\begin{split}
\pi^\vee &\hookrightarrow \Hom_{G^\prime_{ n+1}} (\omega_{n,n+1}, \Ind_{P^\prime_{t, n+1-t}}^{G^\prime_{ n+1}}(\tau\abs\det^{-s_0}\boxtimes \rho_0))_\infty \\
&=\Hom_{M^\prime_{t, n+1-t}}(R_{P^\prime_{t, n+1-t}}(\omega_{n,n+1}), \tau\abs\det^{-s_0}\boxtimes \rho_0)_\infty.
\end{split}
\]
Recall that the subscript $``\infty''$ stands for taking smooth vectors. By Lemma \ref{lem-kudla-filtration}, we have a natural restriction map
\[
\Hom_{M^\prime_{t, n+1-t}}(R_{P^\prime_{t, n+1-t}}(\omega_{n,n+1}), \tau\abs\det^{-s_0}\boxtimes \rho_0)_\infty
\to \Hom_{M^\prime_{t, n+1-t}}(J^t, \tau\abs\det^{-s_0}\boxtimes \rho_0)_\infty.
\]
We claim that the composition of the above two maps gives a nonzero $G_n$-equivariant map
\[
\pi^\vee \hookrightarrow \Hom_{M^\prime_{t, n+1-t}}(J^t, \tau\abs\det^{-s_0}\boxtimes \rho_0)_\infty.
\]

We use Casselman's square-integrability criterion Lemma \ref{lem-casselman} to argue for this claim. Assume that it is not true, then there would exist a nonzero $G_n$-equivariant map
\[
\pi^\vee \hookrightarrow \Hom_{M^\prime_{t, n+1-t}}(J^i, \tau\abs\det^{-s_0}\boxtimes \rho_0)
\]
for some $i<t$. In particular, the latter space would be nonzero for some $i<t$. By Lemma \ref{lem-kudla-filtration} and \eqref{eq-Bern-Frob}, the space
\[
\Hom_{M^\prime_{t-i,i}\times G^\prime_{n+1-t}}(\chi_{t,i} \otimes \cS(G_i)\otimes \omega_{n-i, n+1-t}, R_{\ov{P^\prime_{t-i,i}}}(\tau)\abs\det^{-s_0}\boxtimes \rho_0)
\]
would be nonzero. Since $\tau$ is an irreducible discrete series representation, Casselman's square-integrability criterion implies that
\[
R_{\ov{P^\prime_{t-i,i}}}(\tau)=\tau_1\abs\det^{t_1}\boxtimes \tau_2\abs\det^{t_2}
\]
for some irreducible discrete series representations $\tau_1$ and $\tau_2$ of $G^\prime_{t-i}$ and $G^\prime_{i}$ respectively, and $t_1, t_2\in\R$ such that
\[
t_1<t_2 \quad \textrm{and} \quad t_1\,\cdot\,(t-i)+t_2\,\cdot\,i=0.
\]
In particular, we have $t_1\leq 0$. Hence, the center of $G^\prime_{t-i}$ acts on $R_{\ov{P^\prime_{t-i,i}}}(\tau)\abs\det^{-s_0}$ by $\abs\det^{t_1-s_0}$ up to a unitary character, while $G^\prime_{t-i}$ acts on $\chi_{t,i} \otimes \cS(G_i)\otimes \omega_{n-i, n+1-t}$ by $\abs\det ^{\frac{1-i+t}{2}}$ (see Lemma \ref{lem-kudla-filtration}). But
\[
t_1-s_0<0 \quad \textrm{and} \quad 1-i+t> 0,
\]
which is a contradiction. The claim is proved.

Note that
\[
\begin{split}
&\Hom_{M^\prime_{t, n+1-t}}(J^t, \tau\abs\det^{-s_0}\boxtimes \rho_0)_\infty \\
= & \Hom_{M^\prime_{t, n+1-t}}(\Ind_{P_{t,n-t}\times G^\prime_{t}\times G^\prime_{n+1-t}}^{G_n\times M^\prime_{t,n+1-t}}(\chi_{t,t} \otimes \cS(G_t)\otimes \omega_{n-t, n+1-t}), \tau\abs\det^{-s_0}\boxtimes \rho_0)_\infty\\
\cong & \Ind_{P_{t,n-t}}^{G_n}(\tau\abs\det^{-s_0}\boxtimes \Hom_{G^\prime_{n+1-t}}(\omega_{n-t, n+1-t},\rho_0)_\infty).
\end{split}
\]
Here we apply Lemma \ref{lem-almost-lem1}  in the last isomorphism.
Thus we obtain a nonzero $G_n$-equivariant map
\[
\pi^\vee \hookrightarrow \Ind_{P_{t,n-t}}^{G_n}(\tau\abs\det^{-s_0}\boxtimes \Hom_{G^\prime_{n+1-t}}(\omega_{n-t, n+1-t},\rho_0)_\infty).
\]
Again by Frobenius reciprocity,  there is a nonzero $M_{t,n-t}$-equivariant map
\[
R_{M_{t,n-t}}(\pi^\vee) \to \tau\abs\det^{-s_0}\boxtimes \pi_0
\]
for some irreducible representation $\pi_0$ of $G_{n-t}$. Since $\pi^\vee$ is a discrete series representation, Lemma \ref{lem-casselman} implies that $s_0<0$, which is a contradiction. Therefore we complete the proof.
\end{proof}

\begin{prop} \label{prop-nplus1-to-n}
Let $\pi$ be an irreducible discrete series representation of $G_n$. If $\sigma$ is an irreducible subquotient of $\Theta_{n,n+1}(\pi)$, then we have $\sigma\subset \Ind_{P^\prime_{1,n}}^{G^\prime_{n+1}}(1\boxtimes \sigma_0)$ for some irreducible constituent $\sigma_0$ of $\Theta_{n,n}(\pi)$. Here $``1''$ stands for the trivial character of $G^\prime_1$.
\end{prop}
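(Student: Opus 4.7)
The plan is to mirror the Kudla-filtration/Casselman-criterion strategy of Proposition~\ref{prop-discr-temp}, applied this time to the maximal parabolic $P^\prime_{n,1}$ of $G^\prime_{n+1}$. Because parabolic inductions from parabolics sharing a common Levi are isomorphic after reordering the factors, $\Ind_{P^\prime_{n,1}}^{G^\prime_{n+1}}(\sigma_0\boxtimes 1)\cong\Ind_{P^\prime_{1,n}}^{G^\prime_{n+1}}(1\boxtimes\sigma_0)$, so it suffices to embed $\sigma$ into the former. By Proposition~\ref{prop-discr-temp} we already know that $\sigma$ is tempered, and hence Lemma~\ref{lem-casselman} forces every irreducible constituent of $R_{P^\prime_{n,1}}(\sigma)$ to have $G^\prime_1$-central character of nonnegative real part.

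I would then apply Lemma~\ref{lem-kudla-filtration} with $t=n$ to $R_{P^\prime_{n,1}}(\omega_{n,n+1})$. The bottom layer $J^n$ is particularly tame: $P_{n,0}=G_n$ makes the induction on the $G_n$-side trivial, and $\omega_{0,1}\cong\C$ has trivial $G^\prime_1$-action. A direct calculation using Lemma~\ref{lem-almost-lem1} for the $G_n$-coinvariants of $\cS(G_n)$, combined with an absorption of the $\chi_{n,n}$-twists on $G_n$ and $G^\prime_n$, yields
\[
(J^n\otimes\pi^\vee)_{G_n}\;\cong\;\pi^\vee\boxtimes 1\qquad\text{as a }G^\prime_n\times G^\prime_1\text{-representation.}
\]
The analogous open-orbit computation on $\omega_{n,n}$ identifies $\pi^\vee$ with the open-orbit quotient of $\Theta_{n,n}(\pi)$, which is therefore an irreducible constituent of $\Theta_{n,n}(\pi)$; this provides the candidate $\sigma_0:=\pi^\vee$.

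The remaining task is to show that $R_{P^\prime_{n,1}}(\sigma)$ admits $\pi^\vee\boxtimes 1$ as a quotient. For $0\le i<n$, the irreducible constituents of $(J^i\otimes\pi^\vee)_{G_n}$ carry $G^\prime_1$-central characters of the form $\omega_{\pi_{n-i}}^{-1}\cdot\abs\det^{(i-n)/2}$, where $\pi_{n-i}$ ranges over the $G_{n-i}$-components of the Jacquet modules of $\pi^\vee$. Combining Casselman's criterion for the discrete series $\pi$ (which constrains the possible central characters of these $\pi_{n-i}$) with the temperedness of $\sigma$, a careful central-character comparison isolates $J^n$ as the unique layer compatible with the Jacquet module $R_{P^\prime_{n,1}}(\sigma)$. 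A theta-tower argument based on $\Theta_{n,n}(\pi)\supset\pi^\vee\ne 0$ rules out the degenerate possibility $R_{P^\prime_{n,1}}(\sigma)=0$, so that $\pi^\vee\boxtimes 1$ must appear as a quotient of $R_{P^\prime_{n,1}}(\sigma)$; Frobenius reciprocity~\eqref{eq-Frob} then delivers the desired embedding $\sigma\hookrightarrow\Ind_{P^\prime_{n,1}}^{G^\prime_{n+1}}(\pi^\vee\boxtimes 1)\cong\Ind_{P^\prime_{1,n}}^{G^\prime_{n+1}}(1\boxtimes\pi^\vee)$, completing the proof with $\sigma_0=\pi^\vee$.

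The principal obstacle is the central-character bookkeeping for the higher layers $J^i$ with $0\le i<n$: for generalized Steinberg $\pi$, the Jacquet exponents of $\pi$ can conspire with the Kudla twists $\chi_{n,i}$ so that real-part inequalities become equalities, and one must appeal to the finer unitary part of the central characters to rule out contributions from the intermediate layers. A secondary subtlety is the non-vanishing of $R_{P^\prime_{n,1}}(\sigma)$, i.e.\ ensuring $\sigma$ is not supercuspidal on $G^\prime_{n+1}$; this is expected from theta-tower principles but may require a direct argument in the type II setting considered here.
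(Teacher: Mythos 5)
Your proposal does not follow the paper's route, and at the point you yourself flag as the ``principal obstacle'' there is a genuine gap that your plan cannot close as stated. The paper first pins down the shape of the embedding by external input: Mínguez's explicit theta correspondence (\cite[Theorem 1]{Mi}) gives a nonzero map $\theta_{n,n+1}(\pi)\to\Ind_{P^\prime_{1,n}}^{G^\prime_{n+1}}(1\boxtimes\theta_{n,n}(\pi))$, and Muić's observation that all irreducible subquotients of $\Theta_{n,n+1}(\pi)$ share the same supercuspidal support (\cite[Lemma 4.2]{Mu}) then yields $\sigma\subset\Ind_{P^\prime_{1,n}}^{G^\prime_{n+1}}(1\boxtimes\sigma_0)$ for some irreducible $\sigma_0$; only afterwards does it use Kudla's filtration, with $t=1$ (a two-layer filtration), and a Casselman estimate \emph{on the $G_n$-side} to show the relevant map kills $J^1$, producing the surjection $\Theta_{n,n}(\pi)\twoheadrightarrow R_{P^\prime_{1,n}}(\Theta_{n,n+1}(\pi))_{(G^\prime_1,1)}$ and hence that $\sigma_0$ is a constituent of $\Theta_{n,n}(\pi)$. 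You instead try to isolate the bottom layer of the $t=n$ filtration purely by comparing $G^\prime_1$-central characters. This cannot work: for $i<n$ the layer $J^i$ contains $\omega_{n-i,1}=\cS(M_{(n-i)\times 1}(F))\otimes(\abs{\det}^{-1/2}\boxtimes\abs{\,\cdot\,}^{(n-i)/2})$, and its quotient supported on the zero orbit carries the $G^\prime_1$-character $\abs{\,\cdot\,}^{(n-i)/2}$, which after the Kudla twist $\chi_{n,i}|_{G^\prime_1}=\abs{\,\cdot\,}^{(i-n)/2}$ is exactly trivial, i.e.\ exactly the target character $1$. So your claimed list of $G^\prime_1$-central characters for the intermediate layers is incomplete, and no refinement of the unitary parts on $G^\prime_1$ alone can rule those layers out; what actually eliminates them is an exponent estimate against the Jacquet modules of the discrete series $\pi$ (as in the paper's vanishing of $\Hom_{M_{1,n-1}}(1\boxtimes\omega_{n-1,n},R_{\ov{P_{1,n-1}}}(\pi))$), which your outline never supplies.

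Two further points would also need repair. First, your appeal to Lemma \ref{lem-casselman} has the wrong block: for $R_{P^\prime_{n,1}}(\sigma)$ the lemma constrains the center of the \emph{first} factor $G^\prime_n$ (exponent $\geq 0$), so with $\sigma$ tempered the $G^\prime_1$-exponent is $\leq 0$, not $\geq 0$ as you assert. Second, even granting the layer analysis, passing from the filtration of $R_{P^\prime_{n,1}}(\omega_{n,n+1})$ to conclusions about $R_{P^\prime_{n,1}}(\Theta_{n,n+1}(\pi))$ and about the particular subquotient $\sigma$ requires care, since the coinvariant functor $(\,\cdot\,\otimes\pi^\vee)_{G_n}$ is only right exact; the paper sidesteps this by working with maximal isotypic quotients and Hom spaces rather than with the coinvariants of each layer. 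You also leave the nonvanishing of $R_{P^\prime_{n,1}}(\sigma)$ unproved. In short, the conclusion (with $\sigma_0=\pi^\vee$, consistent with $\Theta_{n,n}(\pi)$ being irreducible by \cite[Theorem 1.7]{FSX}) is correct, but your argument is missing the decisive inputs — either the explicit correspondence plus the common-cuspidal-support lemma, or a $G_n$-side Casselman estimate replacing your $G^\prime_1$-central-character comparison — and as written the key step fails.
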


\begin{proof}
By the explicit theta correspondence for type II dual pair (see \cite[Theorem 1]{Mi}), we have a nonzero $G^\prime_{n+1}$-equivariant map
$$\theta_{n,n+1}(\pi) \to \Ind_{P^\prime_{1,n}}^{G^\prime_{n+1}}(1\boxtimes \theta_{n,n}(\pi)).$$
Let $\sigma$ be an irreducible subquotient of $\Theta_{n,n+1}(\pi)$. Since all irreducible subquotients of $\Theta_{n,n+1}(\pi)$ have the same supercuspidal support (see \cite[Lemma 4.2]{Mu}), we have $\sigma \subset \Ind_{P^\prime_{1,n}}^{G^\prime_{n+1}}(1\boxtimes \sigma_0)$ for some irreducible representation $\sigma_0$ of
$G^\prime_{n}$. Denote by $R_{P^\prime_{1,n}}(\sigma)_{(G^\prime_1, 1)}$ the maximal $(G^\prime_1, 1)$-isotypic quotient of $R_{P^\prime_{1,n}}(\sigma)$, then $R_{P^\prime_{1,n}}(\sigma)_{(G^\prime_1, 1)}$ has $1\boxtimes \sigma_0$ as a quotient. It follows that there is a surjective map
\[
\begin{split}
R_{P^\prime_{1,n}}(\omega_{n,n+1}) &\twoheadrightarrow \pi \boxtimes R_{P^\prime_{1,n}}(\Theta_{n,n+1}(\pi)) \\
& \twoheadrightarrow \pi \boxtimes R_{P^\prime_{1,n}}(\Theta_{n,n+1}(\pi))_{(G^\prime_1, 1)}
\end{split}
\]
and the latter has $\pi\boxtimes 1\boxtimes \sigma_0$ as a subquotient. We claim that the above map vanishes when restrict to $J^1$.

 Suppose that
\[
\Hom_{G_n \times M^\prime_{1,n}}(J^1, \pi \boxtimes R_{P^\prime_{1,n}}(\Theta_{n,n+1}(\pi))_{(G^\prime_1, 1)})
\]
is nonzero. Then there exists a nonzero $G_n \times G^\prime_1$-equivariant map
\[
J^1 \to \pi\boxtimes 1.
\]
We set $t=i=1$ in Lemma \ref{lem-kudla-filtration} to obtain
\[
J^1=\Ind_{P_{1,n-1}\times G_1^\prime\times G_n^\prime}^{G_{n}\times M_{1,n}^\prime}(\chi_{1,1}\otimes \cS(G_1)\otimes \omega_{n-1,n}),
\]
where $\chi_{1,1}=1$ is the trivial character. Thus
\[
\begin{split}
\Hom_{G_n \times G^\prime_1}(J^1, \pi\boxtimes 1)&=\Hom_{G_n \times G^\prime_1}(\Ind_{P_{1,n-1}\times G_1^\prime\times G_n^\prime}^{G_{n}\times M_{1,n}^\prime}(\chi_{1,1}\otimes \cS(G_1)\otimes \omega_{n-1,n}), \pi\boxtimes 1) \\
&\cong\Hom_{M_{1,n-1}}(1\boxtimes \omega_{n-1,n}, R_{\ov{P_{1,n-1}}}(\pi)).
\end{split}
\]
Here we apply Lemma \ref{lem-almost-lem1}  in the last isomorphism.
However, Casselman's square-integrability criterion implies that $G_1$ acts on $R_{\ov{P_{1,n-1}}}(\pi)$ by $\abs{\,\cdot\,}^s$ with $s<0$ up to a unitary character, which implies that
$$\Hom_{M_{1,n-1}}(1\boxtimes \omega_{n-1,n}, R_{\ov{P_{1,n-1}}}(\pi))= 0,$$
and we get a contradiction. Hence the claim holds. On the other hand,
by Lemma \ref{lem-kudla-filtration}, there is an exact sequence
\[
0 \to J^1 \to R_{P^\prime_{1,n}}(\omega_{n,n+1}) \to 1\boxtimes \omega_{n,n} \to 0.
\]
It follows by the claim that there is an $G^\prime_n$-equivariant surjective map
\be \label{eq-nplus1-to-n}
\Theta_{n,n}(\pi) \twoheadrightarrow R_{P^\prime_{1,n}}(\Theta_{n,n+1}(\pi))_{(G^\prime_1, 1)}.
\ee
Thus $\sigma_0$ is a subquotient of $\Theta_{n,n}(\pi)$.
\end{proof}

\begin{thm} \label{thm-alm-eq-rank-discr}
Let $\pi$ be an irreducible discrete series representation of $G_n$, then $\Theta_{n,n+1}(\pi)$ is an irreducible  tempered representation of  $G^\prime_{n+1}$.
\end{thm}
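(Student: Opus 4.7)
The plan is to combine the temperedness result of Proposition~\ref{prop-discr-temp} with an embedding of $\Theta_{n,n+1}(\pi)$ into a unitary parabolically induced representation, thereby making $\Theta_{n,n+1}(\pi)$ pre-unitary, hence semisimple, hence equal (by Howe duality) to its unique irreducible Howe quotient $\theta_{n,n+1}(\pi)$. Since $\pi$ is a discrete series of $\GL_n(F)$ it is in particular generic, so by the author's previous work \cite{FSX} the full theta lift $\Theta_{n,n}(\pi) = \theta_{n,n}(\pi)$ is already irreducible; a direct computation on the open $G_n \times G^\prime_n$-orbit $\GL_n \subset M_{n \times n}(F)$ identifies $\tau := \theta_{n,n}(\pi)$ with $\pi^\vee$, which is again a discrete series, hence tempered.

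Set $\Pi := \Ind^{G^\prime_{n+1}}_{P^\prime_{1,n}}(1 \boxtimes \tau)$. Proposition~\ref{prop-nplus1-to-n} applied to the irreducible quotient $\theta_{n,n+1}(\pi)$ of $\Theta_{n,n+1}(\pi)$ gives $\theta_{n,n+1}(\pi) \hookrightarrow \Pi$, and Frobenius reciprocity combined with exactness of the Jacquet functor then shows $R_{P^\prime_{1,n}}(\Theta_{n,n+1}(\pi))_{(G^\prime_1,1)} \neq 0$. The surjection $\Theta_{n,n}(\pi) \twoheadrightarrow R_{P^\prime_{1,n}}(\Theta_{n,n+1}(\pi))_{(G^\prime_1,1)}$ established inside the proof of Proposition~\ref{prop-nplus1-to-n}, together with the irreducibility of $\tau$, forces this isotypic quotient to be isomorphic to $\tau$; a second application of Frobenius reciprocity produces a nonzero morphism $\phi \colon \Theta_{n,n+1}(\pi) \to \Pi$.

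The hard part will be proving that $\phi$ is injective. Let $K := \ker \phi$. Applying the right-exact functor $(R_{P^\prime_{1,n}}(-))_{(G^\prime_1,1)}$ to the exact sequence $0 \to K \to \Theta_{n,n+1}(\pi) \to \Theta_{n,n+1}(\pi)/K \to 0$, and using that the quotient $\Theta_{n,n+1}(\pi)/K \hookrightarrow \Pi$ already realizes the full isotypic quotient $\tau$, yields $R_{P^\prime_{1,n}}(K)_{(G^\prime_1,1)} = 0$. Since $G^\prime_1 = F^\times$ is abelian, every finite-length smooth $G^\prime_1$-module decomposes as a direct sum of generalized-character Bernstein blocks, and on every non-trivial block the element $\varpi - 1$ acts invertibly; hence the vanishing of the $(G^\prime_1,1)$-coinvariants of $R_{P^\prime_{1,n}}(K)$ is equivalent to the vanishing of its trivial-character block. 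Because Bernstein decomposition is functorial on submodules, the trivial-character block of $R_{P^\prime_{1,n}}(L)$ is zero for every irreducible subrepresentation $L \subset K$, whence $R_{P^\prime_{1,n}}(L)_{(G^\prime_1,1)} = 0$. But Proposition~\ref{prop-nplus1-to-n} gives $L \hookrightarrow \Pi$, forcing $R_{P^\prime_{1,n}}(L)_{(G^\prime_1,1)} \neq 0$ by Frobenius reciprocity, a contradiction. Therefore $K = 0$ and $\Theta_{n,n+1}(\pi) \hookrightarrow \Pi$.

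Since $\tau$ is tempered and (normalized) parabolic induction preserves unitarity, $\Pi$ is pre-unitary, and so is its submodule $\Theta_{n,n+1}(\pi)$. Pre-unitary smooth representations of finite length are semisimple, and $\Theta_{n,n+1}(\pi)$ has finite length by Kudla's theorem \cite{Ku}. Howe duality (Theorem~\ref{theta-corresp.}) supplies a unique irreducible quotient, so semisimplicity forces $\Theta_{n,n+1}(\pi) = \theta_{n,n+1}(\pi)$, which is irreducible; Proposition~\ref{prop-discr-temp} then gives temperedness, finishing the proof. The principal technical obstacle is the Bernstein-decomposition step in the third paragraph, which is needed precisely because the $(G^\prime_1,1)$-isotypic-quotient functor is only right-exact, so the vanishing of the coinvariants on $K$ does not pass to irreducible subrepresentations without additional structural input.
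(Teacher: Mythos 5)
Your overall architecture (temperedness from Proposition \ref{prop-discr-temp}, irreducibility of $\Theta_{n,n}(\pi)$ from \cite{FSX}, the embedding into $\Pi=\Ind_{P^\prime_{1,n}}^{G^\prime_{n+1}}(1\boxtimes\tau)$ coming from Proposition \ref{prop-nplus1-to-n}, and then unitarity of $\Pi$ plus Howe duality to force irreducibility) is a reasonable variant of the paper's argument, which instead finishes by a multiplicity-one count of $1\boxtimes\Theta_{n,n}(\pi)$ in $R_{P^\prime_{1,n}}(\Theta_{n,n+1}(\pi))$ using \eqref{eq-nplus1-to-n}. The final unitarity/semisimplicity step is fine. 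But there is a genuine gap in your injectivity argument, and it is not where you locate it. The step that fails is the very first assertion of the third paragraph: from right-exactness of $\bigl(R_{P^\prime_{1,n}}(-)\bigr)_{(G^\prime_1,1)}$ applied to $0\to K\to\Theta_{n,n+1}(\pi)\to\Theta_{n,n+1}(\pi)/K\to 0$ you may only conclude that the induced map $R_{P^\prime_{1,n}}(K)_{(G^\prime_1,1)}\to R_{P^\prime_{1,n}}(\Theta_{n,n+1}(\pi))_{(G^\prime_1,1)}$ has zero image; since the isotypic-quotient functor is not left exact, this does \emph{not} give $R_{P^\prime_{1,n}}(K)_{(G^\prime_1,1)}=0$. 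Your Bernstein-block argument cannot bridge this, because the only input you have, namely \eqref{eq-nplus1-to-n}, controls the honest $(G^\prime_1,1)$-isotypic quotient of $R_{P^\prime_{1,n}}(\Theta_{n,n+1}(\pi))$, not its generalized-trivial block. Concretely, nothing you have proved rules out that the generalized-trivial block of $R_{P^\prime_{1,n}}(\Theta_{n,n+1}(\pi))$ is a non-split self-extension of $1\boxtimes\tau$ on which $\varpi\in G^\prime_1$ acts unipotently: its honest isotypic quotient is still a single copy of $1\boxtimes\tau$, the quotient $R_{P^\prime_{1,n}}(\Theta_{n,n+1}(\pi)/K)_{(G^\prime_1,1)}$ is still $1\boxtimes\tau$, and yet the block of $R_{P^\prime_{1,n}}(K)$ is nonzero, so $\phi$ could have a nonzero kernel. (The passage from vanishing on $K$ to vanishing on its irreducible submodules, which you flag as the main obstacle, is actually the unproblematic part.)

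The natural repair is to strengthen the key claim inside the proof of Proposition \ref{prop-nplus1-to-n}: the vanishing of $\Hom_{G_n\times G^\prime_1}(J^1,\pi\boxtimes 1)$ proved there via Casselman's criterion uses a strict inequality of central exponents, so the same argument kills intertwiners into $\pi$ tensored with any representation of $G^\prime_1$ on which $G^\prime_1$ acts by the \emph{generalized} trivial character. Hence \eqref{eq-nplus1-to-n} upgrades to a surjection of $\Theta_{n,n}(\pi)$ onto the maximal quotient of $R_{P^\prime_{1,n}}(\Theta_{n,n+1}(\pi))$ on which $G^\prime_1$ acts by the generalized trivial character, i.e.\ onto the whole generalized-trivial block (for finite-length modules the block decomposition is a direct sum, so this block is both a sub and a quotient). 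Then that block has length at most one, length additivity in the blockwise exact sequence forces the trivial block of $R_{P^\prime_{1,n}}(K)$ to vanish, and the rest of your argument (functoriality on submodules, Frobenius reciprocity against Proposition \ref{prop-nplus1-to-n}, then unitarity of $\Pi$) goes through. Note that this generalized-character control is essentially the same input the paper's own multiplicity-one step rests on; without it, your proof as written does not close.
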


\begin{proof}
It suffices to show the  irreducibility. We know that $\Theta_{n,n}(\pi)$ is irreducible by  \cite[Theorem 1.7]{FSX}.
The Frobenius reciprocity and Proposition \ref{prop-nplus1-to-n} imply that, for any irreducible subquotient $\sigma$ of $\Theta_{n,n+1}(\pi)$,
the representation $R_{P^\prime_{1,n}}(\sigma)$ has $1\boxtimes \Theta_{n,n}(\pi)$ as a quotient. To prove that $\Theta_{n,n+1}(\pi)$ is irreducible, it suffices to show that $1\boxtimes \Theta_{n,n}(\pi)$ occurs in $R_{P^\prime_{1,n}}(\Theta_{n,n+1}(\pi))$ with multiplicity one. But this follows from \eqref{eq-nplus1-to-n}.
\end{proof}
\subsection{The tempered representations} \label{sec-alm-temp}

Analogous to Lemma \ref{lem-kudla-filtration}, there is a filtration for the Jacquet module $R_{P_{k,n-k}}(\omega_{n, n+1})$ of $\omega_{n, n+1}$ (see \cite[Proposition 3.2]{Mi}).
\begin{lemma} \label{lem-kudla-filtration2}
The normalized Jacquet module $R_{P_{k,n-k}}(\omega_{n, n+1})$ of $\omega_{n, n+1}$ has a filtration of $M_{k,n-k}\times G^\prime_{n+1}$ modules
\[
0=S^{k+1}\subset S^k \subset \cdots\subset S^1 \subset S^0=R_{P_{k,n-k}}(\omega_{n, n+1})
\]
with successive quotients
\[
\begin{split}
L^i:&= S^i/S^{i+1} \\
& \cong \Ind_{P_{k-i,i}\times G_{n-k}\times P^\prime_{i,n+1-i}}^{M_{k,n-k}\times G^\prime_{n+1}}(\xi_{k,i} \otimes \cS(G_i) \otimes\omega_{n-k, n+1-i} ).
\end{split}
\]
Here, $\xi_{k,i}$ is a character given by
\[
\xi_{k,i}=
\left\{ \begin{array}{ll}
\abs\det ^{\frac{1+k-i}{2}} \quad &\textrm{ on } G_{k-i} \\
\abs\det ^{\frac{1+2k-i}{2}} \quad &\textrm{ on } G_{i} \\
\abs\det ^{\frac{k-i}{2}} \quad  &\textrm{ on } G_{n-k}\\
\abs\det ^{\frac{-1-2k+i}{2}} \quad &\textrm{ on } G^\prime_{i} \\
\abs\det ^{\frac{-k+i}{2}}  \quad &\textrm{ on } G^\prime_{n+1-i}.
  \end{array}
  \right.
\]
While $G_i$ and $G_i^\prime$ act on $\cS(G_i)$ via left and right translation. If $i>k$, then $L^i$ is interpreted to be $0$.
\end{lemma}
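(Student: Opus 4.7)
This lemma is the Jacquet-module counterpart of Lemma \ref{lem-kudla-filtration}: both are specializations of M\'inguez's general Kudla-Rallis filtration for type II dual pairs \cite[Proposition 3.2]{Mi}, where here one takes the Jacquet functor with respect to a maximal parabolic of the smaller member $G_n$ rather than of the larger $G'_{n+1}$. My plan is to follow the same geometric strategy used in the proof of Lemma \ref{lem-kudla-filtration}, which I outline below.

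First, realize $\omega_{n,n+1}$ on $\CS(M_{n\times(n+1)}(F))$ twisted by the normalizing determinant characters. Writing a matrix $M \in M_{n\times(n+1)}(F)$ in block form
\[
M = \begin{pmatrix} A \\ B \end{pmatrix}, \qquad A \in M_{k \times (n+1)}(F),\ B \in M_{(n-k)\times(n+1)}(F),
\]
the unipotent radical $N_{k,n-k}$ of $P_{k,n-k}$ acts by $(A,B) \mapsto (A-XB,\,B)$, so taking $N_{k,n-k}$-coinvariants records the pair $(B,\, A \bmod \operatorname{rowspan}(B))$. This already shows that the orbit structure is governed by two rank invariants.

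Second, stratify $M_{n\times(n+1)}(F)$ into $P_{k,n-k}\times G'_{n+1}$-stable locally closed subsets $Z_0, Z_1, \ldots, Z_k$, where the stratification index $i$ measures the rank of the residual $A$-component surviving the $N_{k,n-k}$-quotient. Applying \cite[Proposition 1.8]{BZ1} to the resulting closed/open decomposition yields a filtration
\[
0 = S^{k+1} \subset S^k \subset \cdots \subset S^0 = R_{P_{k,n-k}}(\omega_{n,n+1})
\]
with successive quotients $L^i = S^i/S^{i+1}$ supported on $Z_i$. Identifying $Z_i$ as a homogeneous space for $M_{k,n-k}\times G'_{n+1}$, a base-point computation combined with Mackey theory produces the stated induced representation from $P_{k-i,i}\times G_{n-k}\times P'_{i,n+1-i}$: the factor $\CS(G_i)$ records the $G_i \times G'_i$-action on the "full-rank coupling" portion of $Z_i$, while the factor $\omega_{n-k,n+1-i}$ is the Weil representation naturally living on the remaining $(n-k)\times(n+1-i)$ block uncoupled from the peeled-off $G_i$ and $G'_i$.

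The main technical obstacle is the explicit determination of the character $\xi_{k,i}$: reading off the exponents of $\abs{\det}$ on each of $G_{k-i}$, $G_i$, $G_{n-k}$, $G'_i$, and $G'_{n+1-i}$ amounts to a careful bookkeeping exercise that combines (i) the normalizing twist $\abs{\det}^{-(n+1)/2}\boxtimes\abs{\det}^{n/2}$ built into $\omega_{n,n+1}$, (ii) the modular characters of $P_{k,n-k}$, $P_{k-i,i}$, and $P'_{i,n+1-i}$ introduced by the normalized Jacquet and induction functors, and (iii) the modular correction coming from the comparison of the canonical measure on the stratum $Z_i$ with the restriction of Haar measure on $M_{n\times(n+1)}$. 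The overall shape of the answer is forced by the Kudla-Rallis-M\'inguez framework, and one verifies agreement with the formula given for $\xi_{k,i}$ by a direct computation parallel to the one already carried out in Lemma \ref{lem-kudla-filtration}.
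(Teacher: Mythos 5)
Your proposal is correct and takes essentially the same route as the paper: the paper offers no proof of this lemma, deferring entirely to M\'inguez \cite[Proposition 3.2]{Mi}, which is exactly the reference you invoke. Your sketch of the Bernstein--Zelevinsky stratification of $\CS(M_{n\times(n+1)}(F))$ via \cite[Proposition 1.8]{BZ1} and the Mackey-theoretic identification of the graded pieces is a faithful outline of how that cited proposition is proved, with the bookkeeping for $\xi_{k,i}$ left, as in the paper, to the reference.
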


Make use of Casselman's square-integrability criterion, we obtain the following lemma.
\begin{lemma} \label{lem-hom-j^i}
Let $\pi_0 \in \mathrm{Irr}(G_{n-k})$, and $\tau$ be an irreducible discrete series representation of $G_k$, then
\[
\Hom_{M_{k, n-k}} (L^i, \tau \boxtimes \pi_0)
\]
is nonzero if and only if $i=k$, in which case
\[
\Hom_{M_{k, n-k}} (L^k, \tau \boxtimes \pi_0)_\infty \cong \Ind_{P^\prime_{k,n+1-k}}^{G^\prime_{n+1}}(\tau \boxtimes \Theta_{n-k,n+1-k}(\pi_0)^\vee).
\]
\end{lemma}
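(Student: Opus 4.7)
The plan is to split into two cases, $i<k$ and $i=k$: in the first case I will argue for vanishing using Bernstein's Frobenius reciprocity \eqref{eq-Bern-Frob} combined with Casselman's temperedness criterion (Lemma \ref{lem-casselman}); in the second I will give a direct computation via Lemmas \ref{lem-almost-lem1} and \ref{lem-almost-lem2}.

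First I would separate the two induction directions in $L^i$, writing
\[
L^i \cong \Ind_{P^\prime_{i,n+1-i}}^{G^\prime_{n+1}}\!\Bigl(\Ind_{P_{k-i,i}}^{G_k}\bigl(\xi_{k,i}|_{M_{k-i,i}}\otimes\cS(G_i)\bigr)\boxtimes\bigl(\xi_{k,i}|_{G_{n-k}}\otimes\omega_{n-k,n+1-i}\bigr)\Bigr)
\]
as $M_{k,n-k}\times G^\prime_{n+1}$-representations. Because the outer parabolic induction only affects the $G^\prime_{n+1}$-factor while $M_{k,n-k}$ acts pointwise on sections, it commutes with $\Hom_{M_{k,n-k}}(\,\cdot\,,\tau\boxtimes\pi_0)_\infty$ in the smooth category. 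So the problem reduces to computing the $M^\prime_{i,n+1-i}$-representation obtained by applying $\Hom_{M_{k,n-k}}(\,\cdot\,,\tau\boxtimes\pi_0)_\infty$ to the inner inducing data, and then re-inducing.

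For $i<k$, I factor the Hom as $\Hom_{G_k}\boxtimes\Hom_{G_{n-k}}$ and apply \eqref{eq-Bern-Frob} to the $G_k$-factor:
\[
\Hom_{G_k}\!\bigl(\Ind_{P_{k-i,i}}^{G_k}(\xi_{k,i}|_{M_{k-i,i}}\otimes\cS(G_i)),\tau\bigr)_\infty \cong \Hom_{M_{k-i,i}}\!\bigl(\xi_{k,i}|_{M_{k-i,i}}\otimes\cS(G_i),R_{\ov{P_{k-i,i}}}(\tau)\bigr)_\infty.
\]
Since $G_{k-i}$ acts trivially on $\cS(G_i)$, its central character on the left is the strictly positive twist $\abs\det^{(1+k-i)/2}$ (note $k-i\geq 1$). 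On the right, the Weyl element swapping the two Levi blocks identifies $R_{\ov{P_{k-i,i}}}(\tau)$ with $R_{P_{i,k-i}}(\tau)$ with blocks transposed; Lemma \ref{lem-casselman} applied to the latter, combined with unitarity of the central character of $\tau$, forces every irreducible constituent of $R_{\ov{P_{k-i,i}}}(\tau)$ to have central character on $G_{k-i}$ of the form $\chi\abs{\,\cdot\,}^\alpha$ with $\alpha<0$. Comparison contradicts the positive exponent on the left, so the Hom vanishes.

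For $i=k$, the parabolic $P_{k-i,i}=G_k$ is trivial and no $G_k$-induction appears in the inducing data. The key computation is
\[
\Hom_{G_k}\!\bigl(\cS(G_k)\otimes\xi_{k,k}|_{G_k}\otimes\xi_{k,k}|_{G^\prime_k},\tau\bigr)_\infty\cong\tau,
\]
where I use the observation that $\xi_{k,k}|_{G_k}=\abs\det^{(1+k)/2}$ and $\xi_{k,k}|_{G^\prime_k}=\abs\det^{-(1+k)/2}$ are inverses: pointwise multiplication $\phi\mapsto\xi_{k,k}|_{G_k}^{-1}\cdot\phi$ by the inverse character on $G_k$ gives a $G_k\times G^\prime_k$-equivariant isomorphism of $\cS(G_k)\otimes\xi_{k,k}|_{G_k}\otimes\xi_{k,k}|_{G^\prime_k}$ onto $\cS(G_k)$ with the standard biregular action, after which Lemma \ref{lem-almost-lem1} concludes. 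The $G_{n-k}$-factor is handled directly by Lemma \ref{lem-almost-lem2}, yielding $\Theta_{n-k,n+1-k}(\pi_0)^\vee$. Reassembling and re-inducing along $P^\prime_{k,n+1-k}\subset G^\prime_{n+1}$ gives the claimed formula.

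The main obstacle will be carefully tracking the character twists on $\cS(G_k)$ so that the two $\pm(1+k)/2$ powers from $\xi_{k,k}$ precisely cancel on $\tau$, and rigorously justifying the commutation of $\Hom_{M_{k,n-k}}$ with $G^\prime_{n+1}$-parabolic induction in the smooth setting. A secondary subtlety is translating $R_{\ov{P_{k-i,i}}}$ to the standard Jacquet module via the appropriate Weyl element so that Lemma \ref{lem-casselman} (stated only for standard parabolics) applies directly.
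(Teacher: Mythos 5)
Your proposal follows essentially the same route as the paper's proof: for $i<k$ you combine Bernstein's reciprocity \eqref{eq-Bern-Frob} with Casselman's criterion (Lemma \ref{lem-casselman}) to get a central-character contradiction on the $G_{k-i}$-block, and for $i=k$ you invoke Lemmas \ref{lem-almost-lem1} and \ref{lem-almost-lem2} after cancelling the $\xi_{k,k}$-twists. Your extra care with the Weyl swap relating $R_{\ov{P_{k-i,i}}}(\tau)$ to the standard Jacquet module and with the explicit $\abs\det^{\pm(1+k)/2}$ cancellation only makes explicit what the paper leaves implicit, so the argument is correct and not genuinely different.
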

\bp
The proof is analogous to that for Proposition \ref{prop-discr-temp}. Assume that
\[
\Hom_{M_{k, n-k}} (L^i, \tau \boxtimes \pi_0)\neq 0
\]
for some $i<k$. Then by Lemma \ref{lem-kudla-filtration2} and \eqref{eq-Bern-Frob}, the space
\[
\Hom_{M_{k-i,i}\times G_{n-k}}(\xi_{k,i} \otimes \cS(G_i)\otimes \omega_{n-k, n+1-i}, R_{\ov{P_{k-i,i}}}(\tau)\boxtimes \pi_0)
\]
would be nonzero. Since $\tau$ is an irreducible discrete series representation, Casselman's square-integrability criterion implies that
\[
R_{\ov{P_{k-i,i}}}(\tau)=\tau_1\abs\det^{t_1}\boxtimes \tau_2\abs\det^{t_2}
\]
for some irreducible discrete series representations $\tau_1$ and $\tau_2$ of $G_{k-i}$ and $G_{i}$ respectively, and $t_1, t_2\in\R$ such that
\[
t_1<t_2 \quad \textrm{and} \quad t_1\,\cdot\,(k-i)+t_2\,\cdot\,i=0.
\]
In particular, we have $t_1\leq 0$. Hence, the center of $G_{k-i}$ acts on $R_{\ov{P_{k-i,i}}}(\tau)$ by $\abs\det^{t_1}$ up to a unitary character, while $G_{k-i}$ acts on $\xi_{k,i} \otimes \cS(G_i)\otimes \omega_{n-k, n+1-i}$ by $\abs\det ^{\frac{1+k-i}{2}}$ (see Lemma \ref{lem-kudla-filtration2}). But
\[
t_1\leq0 \quad \textrm{and} \quad 1+k-i> 0,
\]
which is a contradiction. Thus $\Hom_{M_{k, n-k}} (L^i, \tau \boxtimes \pi_0)= 0$ for all $i<k$.

By Lemma \ref{lem-almost-lem1} and Lemma \ref{lem-almost-lem2}, we have
\[
\Hom_{G_k}(\cS(G_k), \tau)_\infty\cong \tau,
\]
and
\[
\Hom_{G_{n-k}}(\omega_{n-k, n+1-k}, \pi_0)_\infty \cong \Theta_{n-k,n+1-k}(\pi_0)^\vee.
\]
It follows that
\[
\begin{split}
&\Hom_{M_{k, n-k}}(L^k, \tau\boxtimes \pi_0)_\infty \\
= & \Hom_{M_{k, n-k}}(\Ind_{G_k\times G_{n-k}\times P^\prime_{k,n+1-k}}^{M_{k,n-k}\times G^\prime_{n+1}}(\xi_{k,k} \otimes \cS(G_k)\otimes \omega_{n-k, n+1-k}), \tau\boxtimes \pi_0)_\infty\\
\cong & \Ind_{P^\prime_{k,n+1-k}}^{G^\prime_{n+1}}(\tau \boxtimes \Theta_{n-k,n+1-k}(\pi_0)^\vee).
\end{split}
\]
Hence we complete the proof.
\ep

\begin{lemma} \label{lem-induction-prin}
Let $\pi \in \mathrm{Irr}(G_n)$, $\pi_0 \in \mathrm{Irr}(G_{n-k})$, and let $\tau$ be an irreducible discrete series representation of $G_k$. If $\pi$ is a subrepresentation of $\Ind_{P_{k,n-k}}^{G_n}(\tau \boxtimes \pi_0)$, then $\Theta_{n,n+1}(\pi)^\vee$ is a subrepresentation of $\Ind_{P^\prime_{k,n+1-k}}^{G^\prime_{n+1}}(\tau \boxtimes \Theta_{n-k,n+1-k}(\pi_0)^\vee)$.
\end{lemma}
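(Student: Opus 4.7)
The plan is to chain the given hypothesis through the sequence of Homs computed via Frobenius reciprocity and the Kudla-type filtration, reducing everything to Lemma \ref{lem-hom-j^i}. The starting point is Lemma \ref{lem-almost-lem2}, which identifies $\Theta_{n,n+1}(\pi)^\vee \cong \Hom_{G_n}(\omega_{n,n+1}, \pi)_\infty$ as $G^\prime_{n+1}$-representations. Applying the left-exact functor $\Hom_{G_n}(\omega_{n,n+1}, -)_\infty$ to the given embedding $\pi \hookrightarrow \Ind_{P_{k,n-k}}^{G_n}(\tau \boxtimes \pi_0)$ yields a $G^\prime_{n+1}$-equivariant injection
\[
\Theta_{n,n+1}(\pi)^\vee \hookrightarrow \Hom_{G_n}\!\bigl(\omega_{n,n+1}, \Ind_{P_{k,n-k}}^{G_n}(\tau \boxtimes \pi_0)\bigr)_\infty.
\]
By the standard Frobenius reciprocity \eqref{eq-Frob}, the right-hand side is isomorphic to $\Hom_{M_{k,n-k}}(R_{P_{k,n-k}}(\omega_{n,n+1}), \tau \boxtimes \pi_0)_\infty$.

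The next step is to exploit the filtration of Lemma \ref{lem-kudla-filtration2}:
\[
0 = S^{k+1} \subset S^k \subset \cdots \subset S^1 \subset S^0 = R_{P_{k,n-k}}(\omega_{n,n+1}), \qquad L^i = S^i/S^{i+1}.
\]
Applying $\Hom_{M_{k,n-k}}(-, \tau\boxtimes \pi_0)$ to each short exact sequence $0 \to S^{i+1} \to S^i \to L^i \to 0$ produces a left-exact sequence $0 \to \Hom(L^i, \tau\boxtimes \pi_0) \to \Hom(S^i, \tau\boxtimes \pi_0) \to \Hom(S^{i+1}, \tau\boxtimes \pi_0)$. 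By Lemma \ref{lem-hom-j^i}, $\Hom_{M_{k,n-k}}(L^i, \tau\boxtimes \pi_0) = 0$ for $0 \leq i < k$, so iterating this gives an injection
\[
\Hom_{M_{k,n-k}}(R_{P_{k,n-k}}(\omega_{n,n+1}), \tau \boxtimes \pi_0) \hookrightarrow \Hom_{M_{k,n-k}}(S^k, \tau \boxtimes \pi_0) = \Hom_{M_{k,n-k}}(L^k, \tau \boxtimes \pi_0),
\]
since $S^k = L^k$. Passing to smooth vectors preserves this injection, and then applying Lemma \ref{lem-hom-j^i} for $i = k$ identifies
\[
\Hom_{M_{k,n-k}}(L^k, \tau \boxtimes \pi_0)_\infty \cong \Ind_{P^\prime_{k,n+1-k}}^{G^\prime_{n+1}}\!\bigl(\tau \boxtimes \Theta_{n-k,n+1-k}(\pi_0)^\vee\bigr).
\]
Composing all the injections produces the desired embedding.

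The main technical point on which this rests is the vanishing $\Hom_{M_{k,n-k}}(L^i, \tau \boxtimes \pi_0) = 0$ for $i < k$; this is precisely what Lemma \ref{lem-hom-j^i} supplies, using the discrete-series hypothesis on $\tau$ together with Casselman's square-integrability criterion (Lemma \ref{lem-casselman}) to control the central characters on $R_{\overline{P_{k-i,i}}}(\tau)$ and compare them with the central character $\abs{\det}^{(1+k-i)/2}$ coming from $\xi_{k,i}$ on the $G_{k-i}$-factor. I expect no further obstacle beyond this: all remaining steps are formal manipulations with Frobenius reciprocity, left-exactness of $\Hom$, and the compatibility of smooth-vectors with inclusions.
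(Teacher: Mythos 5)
Your argument is correct and follows essentially the same route as the paper's proof: Lemma \ref{lem-almost-lem2} plus Frobenius reciprocity \eqref{eq-Frob}, then the filtration of Lemma \ref{lem-kudla-filtration2} combined with the vanishing and the $i=k$ computation from Lemma \ref{lem-hom-j^i}. Your treatment of the filtration step is in fact slightly more careful than the paper's (you only claim the injection $\Hom_{M_{k,n-k}}(R_{P_{k,n-k}}(\omega_{n,n+1}),\tau\boxtimes\pi_0)\hookrightarrow\Hom_{M_{k,n-k}}(L^k,\tau\boxtimes\pi_0)$ via left-exactness, which is all that is needed, where the paper asserts an equality).
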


\begin{proof}
By Lemma \ref{lem-almost-lem2}, we have
\[
\begin{split}
\Theta_{n,n+1}(\pi)^\vee & \cong \Hom_{G_n}(\omega_{n,n+1}, \pi)_\infty \\
& \hookrightarrow \Hom_{G_n}(\omega_{n,n+1}, \Ind_{P_{k,n-k}}^{G_n}(\tau \boxtimes \pi_0))_\infty \\
& \cong \Hom_{M_{k, n-k}} (R_{P_{k, n-k}}(\omega_{n,n+1}), \tau \boxtimes \pi_0))_\infty.
\end{split}
\]
Moreover, by Lemma \ref{lem-kudla-filtration2} and Lemma \ref{lem-hom-j^i}, we have
\[
\begin{split}
&\Hom_{M_{k, n-k}} (R_{P_{k, n-k}}(\omega_{n,n+1}), \tau \boxtimes \pi_0))_\infty \\
= & \Hom_{M_{k, n-k}} (L^k, \tau \boxtimes \pi_0))_\infty \\
\cong & \Ind_{P^\prime_{k,n+1-k}}^{G^\prime_{n+1}}(\tau \boxtimes \Theta_{n-k,n+1-k}(\pi_0)^\vee).
\end{split}
\]
Then the lemma follows.
\end{proof}

Now we come to the main result in this section.
\begin{thm} \label{thm-alm-eq-rank-temp}
Let $\pi$ be an irreducible tempered representation of $G_n$. Then $\Theta_{n,n+1}(\pi)$ is irreducible and  tempered.
\end{thm}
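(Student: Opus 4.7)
The plan is to induct on $n$, using Theorem \ref{thm-alm-eq-rank-discr} as the base case (and more generally as the base case whenever $\pi$ is itself a discrete series representation; the case $n=1$ in particular is covered this way). Assuming $\pi$ is tempered but not discrete series, I would invoke the standard classification of tempered representations of $\GL_n(F)$ to embed
\[
\pi \hookrightarrow \Ind_{P_{k,n-k}}^{G_n}(\tau \boxtimes \pi_0)
\]
for some $1\le k<n$, with $\tau$ an irreducible discrete series representation of $G_k$ and $\pi_0$ an irreducible tempered representation of $G_{n-k}$.

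Next I would apply Lemma \ref{lem-induction-prin} to transport this embedding to the theta side, obtaining
\[
\Theta_{n,n+1}(\pi)^\vee \hookrightarrow \Ind_{P^\prime_{k,n+1-k}}^{G^\prime_{n+1}}\bigl(\tau \boxtimes \Theta_{n-k,n+1-k}(\pi_0)^\vee\bigr).
\]
By the inductive hypothesis applied to $\pi_0\in\mathrm{Irr}(G_{n-k})$, the lift $\Theta_{n-k,n+1-k}(\pi_0)$ is irreducible and tempered, and so is its contragredient. Consequently $\tau \boxtimes \Theta_{n-k,n+1-k}(\pi_0)^\vee$ is an irreducible tempered representation of the Levi $M^\prime_{k,n+1-k}$.

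The crucial step is then to invoke Bernstein's classical theorem that parabolic induction from an irreducible unitary (in particular tempered) representation of a Levi subgroup of $\GL_N(F)$ over a $p$-adic field is irreducible, together with the standard fact that parabolic induction of tempered data is tempered. This forces the right-hand side of the embedding to be irreducible and tempered, so that $\Theta_{n,n+1}(\pi)^\vee$ is either zero or equal to the whole induced representation; non-vanishing of $(\GL_n,\GL_{n+1})$ theta lifts (implicit already in Proposition \ref{prop-nplus1-to-n}) rules out the first possibility. Passing to contragredients completes the induction.

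The main obstacle, beyond correctly arranging the induction, is obtaining irreducibility on the right-hand side: this rests entirely on Bernstein's theorem, so the real substance is packaged in Lemma \ref{lem-induction-prin}, whose proof used Kudla's filtration (Lemma \ref{lem-kudla-filtration2}) together with Casselman's square-integrability criterion (Lemma \ref{lem-casselman}) to eliminate every graded piece $L^i$ with $i<k$ and identify $\Hom_{M_{k,n-k}}(L^k,\tau\boxtimes\pi_0)_\infty$ with the desired induced representation. No new ingredient beyond these tools and the discrete series case is needed.
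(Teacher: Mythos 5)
Your proposal is correct and follows essentially the same route as the paper: induct on $n$ with Theorem \ref{thm-alm-eq-rank-discr} as the base case, embed $\pi$ into $\Ind_{P_{k,n-k}}^{G_n}(\tau\boxtimes\pi_0)$, transport via Lemma \ref{lem-induction-prin}, and conclude from irreducibility and temperedness of the induced representation on the theta side. You merely make explicit two points the paper leaves implicit, namely Bernstein's irreducibility of tempered (unitary) parabolic induction for $\GL_N$ and the nonvanishing of the lift, both of which are fine.
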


\begin{proof}
We prove by induction on $n$. If $\pi$ is a discrete series representation, then the assertion has been proved in Theorem \ref{thm-alm-eq-rank-discr}.

Suppose that $\pi$ is tempered but not a discrete series. Then there exists an irreducible discrete representation $\tau$ of $G_k$, and an irreducible tempered representation $\pi_0$ of $G_{n-k}$, such that
\[
\pi \hookrightarrow \Ind_{P_{k,n-k}}^{G_n}(\tau \boxtimes \pi_0).
\]
It follows from Lemma \ref{lem-induction-prin} that
\[
\Theta_{n,n+1}(\pi)^\vee \hookrightarrow \Ind_{P^\prime_{k,n+1-k}}^{G^\prime_{n+1}}(\tau \boxtimes \Theta_{n-k,n+1-k}(\pi_0)^\vee).
\]
By induction, we see that $\Theta_{n-k,n+1-k}(\pi_0)$ is irreducible and tempered, which imply that $\Ind_{P^\prime_{k,n+1-k}}^{G^\prime_{n+1}}(\tau \boxtimes \Theta_{n-k,n+1-k}(\pi_0)^\vee)$  is irreducible and tempered. Hence so is $\Theta_{n,n+1}(\pi)$.
\end{proof}


\begin{thebibliography}{}

\bibitem[AB]{AB}
J. Adams, D. Barbasch, \textit{Reductive dual pair correspondence for complex groups}, Journal of Functional Analysis 132, No. 1 (1995), 1--42.

\bibitem[APS]{APS}
J. Adams, D. Prasad and G. Savin, \textit{Euler Poincare Characteristic for the Oscillator Representation},  Representation Theory, Number Theory, and Invariant Theory, Birkh\"auser, Cham, 2017, 1--22.

%\bibitem[BK]{BK}
%J.  Bernstein and B. Kr\"{o}tz, \textit{Smooth Frechet globalizations of
%Harish-Chandra modules},  Isr. J. Math. 199 (2014), 45--111.

\bibitem[Ber]{Ber}
I. N. Bernshtein, \textit{Representations of $p$-adic groups}, Notes by K.E.Rumelhart, Harvard University, 1992.

\bibitem[BZ1]{BZ1}
I. N. Bernshtein and A. V. Zelevinskii, \textit{Representations of the group GL (n, F) where F is a non-archimedean local field}, Russian Mathematical Surveys, Vol. 31, No. 3 (1976), 1--68.

\bibitem[BZ2]{BZ2}
I. N. Bernstein and A. V. Zelevinsky, \textit{Induced representations of reductive p-adic groups, I, }  Ann.
scient. \'Ec. Norm. Sup., $4^e$ s\'erie 10 (1977), 441--472.

%\bibitem[Ca1]{Ca1}
%W. Casselman, \textit{Canonical extensions of Harish-Chandra moudles
%to representations of $G$}, Can. Jour. Math. 41 (1989), 385--438.

\bibitem[Ca2]{Ca2}
W. Casselman, \textit{Introduction to the theory of admissible representations of $p$-adic reductive groups}, preprint (version of  1995).

\bibitem[CS]{CS}
W. Casselman and F. Shahidi, \textit{On irreducibility of standard modules for generic
representations}, Ann. Sci. \'Ecole Norm. Sup. 31 (1998), 561--589.

\bibitem[CPS]{CPS}
J. Cogdell and I. I. Piatetski-Shapiro, \textit{Derivatives and L-functions for GL(n)},  The Heritage of B. Moishezon, IMCP, 2010.


\bibitem[CHH]{CHH}
M. Cowling, U. Haagerup and R. Howe,  \textit{Almost $L^2$ matrix coefficients}, J. Reine Angew. Math. 387 (1988), 97--110.

\bibitem[DM]{DM}
J. Dixmier and P. Malliavin, \textit{Factorisations de fonctions et de vecteurs ind\'efiniment diff\'erentiables}, Bull. Sci. Math. (2), 102 (4),  307--330 (1978).

\bibitem[FSX]{FSX}
Y. Fang, B. Sun and H. Xue, \textit{Godement--Jacquet L--functions and full theta lifts}, Mathematische Zeitschrift, 289 (2018), 593--604.

\bibitem[GSa]{GSa}
W. T. Gan, G. Savin, \textit{Representations of metaplectic groups I: epsilon dichotomy and local Langlands correspondence}, Compositio Mathematica,  148.6 (2012), 1655--1694.

\bibitem[GSu]{GSu}
W. T. Gan, B. Sun, \textit{The Howe duality conjecture: quaternionic case}, Representation Theory, Number Theory, and Invariant Theory, Birkh\"auser, Cham, 2017, 175--192.

\bibitem[GT]{GT}
W. T. Gan, S. Takeda, \textit{A proof of the Howe duality conjecture}, J. Amer. Math. Soc. 29 (2016), 473--493.

\bibitem[GJ]{GJ}
 R. Godement and H. Jacquet, \textit{Zeta functions of simple algebras}, Lecture Notes in Mathematics, Vol. 260. Springer-Verlag, Berlin-New York, 1972.

\bibitem[GH]{GH}
D. Goldfeld and J. Hundley, \textit{Automorphic representations and L-functions for the general linear group}, Vol. 1, Cambridge University Press, 2011.

\bibitem[HO]{HO}
 V. Heiermann and E. Opdam, \textit{On the tempered L-function conjecture}, Am. J. Math. 135 (2013), 777--800.

\bibitem[HS]{HS}
J. Hong and B. Sun, \textit{Generalized semi-invariant distributions on p-adic spaces},  Mathematische Annalen 367.3-4 (2017), 1727--1776.

 \bibitem[Ho]{Ho}
R. Howe,
\textit{Transcending classical invariant theory},
J. Amer. Math. Soc., 2 (1989), 535--552.



%\bibitem[Ja]{Ja}
%H. Jacquet, \textit{Archimedean Rankin-Selberg integrals, in Automorphic Forms and L- functions II: Local Aspects}, Proceedings of a workshop in honor of Steve Gelbart on the occasion of his 60th birthday, Contemporary Mathematics, volumes 489, 57-172, AMS and BIU 2009.

\bibitem[JL]{JL}
H. Jacquet and R. P. Langlands, \textit{Automorphic Forms on GL(2)}, Lecture Notes in Math., Vol. 278. Springer, 1972.

 \bibitem[JPSS]{JPSS}
 H. Jacquet, I. I. Piatetski-Shapiro, and J. Shalika, \textit{Rankin-Selberg Convolutions}. Am. J. Math.
105 (1983), 367--464.

\bibitem[Ku]{Ku}
S. S. Kudla, \textit{On the local theta correspondence},  Invent. Math. 83 (1986), 229--255 .

\bibitem[KZ]{KZ}
A. W. Knapp and G. J. Zuckerman, \textit{Classification of irreducible tempered representations of semisimple groups}, Ann. Math. (1982), 389--455.

\bibitem[LPTZ]{LPTZ}
J.-S. Li, A. Paul, E.-C. Tan, and C.-B. Zhu,  \textit{The explicit duality correspondence of $(\Sp(p, q), \RO^*(2n))$},  Journal of Functional Analysis 200 No. 1 (2003), 71--100.

\bibitem[LM]{LM}
H. Y. Loke and J. Ma,  \textit{Invariants and $K$--spectrums of local theta lifts}, Compositio Mathematica 151 No. 1 (2015), 179--206.

 \bibitem[Mi]{Mi} A. M\'inguez, \textit{Correspondance de Howe explicite: paires duales de type II}, Ann. Sci. \'Ec. Norm. Sup\'er. 41 (2008), 717--741.

\bibitem[Mo]{Mo}
C. M{\oe}glin,  \textit{Correspondance de Howe pour les paires reductives duales: quelques calculs dans le cas archim{\'e}dien}, Journal of Functional Analysis 85 No. 1 (1989), 1--85.

\bibitem[MVW]{MVW} C. M\oe glin, M.F. Vign\'eras and J.L. Waldspurger,  \textit{Correspondance de Howe sur un corps p-adique}, LNM 1291, Springer-Verlag, 1987.

\bibitem[Mu]{Mu}
G. Mui\'c,  \textit{On the structure of theta lifts of discrete series for dual pairs $(Sp (n), O (V))$}, Isr. J. Math. 164 (2008), 87--124.

\bibitem[Pa]{Pa}
A. Paul, \textit{Howe correspondence for real unitary groups}, Journal of Functional Analysis 159 No. 2 (1998), 384--431.

%\bibitem[RV]{RV}
%D. Ramakrishnan, R. J. Valenza, \textit{Fourier analysis on number fields}, Graduate Texts in Mathematics, Vol. 186, Springer Science \& Business Media, 2013.

%\bibitem[Su1]{Su1} B. Sun, \textit{Almost linear Nash groups}, Chin. Ann. Math. 36B(3), 2015, 355-400.

%\bibitem[Su2]{Su2} B. Sun, \textit{On representations of real Jacobi groups}, Sci. China Math. 55, (2012), 541-555.

\bibitem[Ta]{Ta}
J. Tate, \textit{Fourier analysis in number fields and Hecke's zeta-functions}, Ph.D. thesis, Princeton University, Princeton, NJ, 1950.

%\bibitem[Tr]{Tr}
%F. Treves, \textit{Topological vector spaces, distributions and kernels}, Academic Press, New York, 1967.

%\bibitem[Wa1]{Wa1}
%N. Wallach, \textit{Real Reductive Groups I}, Academic Press, San
%Diego, 1988.

%\bibitem[Vo1]{Vo1}
%D. A. Vogan, \textit{Gelfand-Kirillov dimension for Harish-Chandra modules}, Invent. Math. 48, No. 1 (1978), 75--98.

%\bibitem[Vo2]{Vo2}
%D. A. Vogan, \textit{The unitary dual of GL (n) over an Archimedean field}, Invent. Math. 83, No. 3 (1986), 449--505.

\bibitem[Wal]{Wal}
J. L. Waldspurger, \textit{D{\'e}monstration d'une conjecture de dualit{\'e} de Howe dans le cas p-adique, $p\neq 2$}, Israel Mathematical Conference Proceedings, Vol. 2 (1990), 267--324.

%\bibitem[Wa]{Wa}
%N. Wallach, \textit{Real Reductive Groups II}, Academic Press, San Diego, 1992.

\bibitem[We1]{We1}
A. Weil, \textit{Sur la formule de Siegel dans la th\'eorie des groupes classiques}, Acta Math.  113(1965), 1--87.

\bibitem[We2]{We2}
A. Weil, \textit{Fonction z{\^e}ta et distributions}, S{\'e}minaire Bourbaki, Vol. 9 (1966), 523--531.

\bibitem[Xue]{Xue}
H. Xue,  \textit{Homogenous distributions on finite dimensional vector spaces},  J. Lie Theory 28, No.1 (2018), 33--41.

\bibitem[Ze]{Ze} A. V. Zelevinsky, \textit{Induced representations of reductive p-adic groups, II. On irreducible representations
of GL(n)}, Ann. scient. \'Ec. Norm. Sup., $4^e$ s\'erie 13 (1980), 165--210.




\end{thebibliography}
\end{document}